\title{Equivariant Framed 1-Manifolds and the Pontryagin-Thom Isomorphism}
\author{Lucas Williams}
\address{Department of Mathematics and Statistics, Binghamton University, Binghamton, NY 13902, USA}
\email{lwilli39@binghamton.edu}
\def\l@section{\@tocline{1}{0pt}{1pc}{}{}}
\def\l@subsection{\@tocline{2}{0pt}{1pc}{4.6em}{}}
\def\l@subsubsection{\@tocline{3}{0pt}{1pc}{7.6em}{}}
\renewcommand{\tocsection}[3]{%
	\indentlabel{\@ifnotempty{#2}{\makebox[2.3em][l]{%
				\ignorespaces#1 #2.\hfill}}}#3}
\renewcommand{\tocsubsection}[3]{%
	\indentlabel{\@ifnotempty{#2}{\hspace*{2.3em}\makebox[2.3em][l]{%
				\ignorespaces#1 #2.\hfill}}}#3}
\renewcommand{\tocsubsubsection}[3]{%
	\indentlabel{\@ifnotempty{#2}{\hspace*{4.6em}\makebox[3em][l]{%
				\ignorespaces#1 #2.\hfill}}}#3}
\newcommand{\R}{\mathbb R}
\newcommand{\C}{\mathbb C}
\newcommand{\Z}{\mathbb{Z}}
\newcommand{\Sph}{\mathbb{S}}
\newcommand{\RP}{\mathbb{RP}}
\newcommand{\po}[2]{\ar@{}@<{#2}>[rd]|({#1})*\txt{\Large $\ulcorner$}}
\newcommand{\pb}[2]{\ar@{}@<{#2}>[rd]|({#1})*\txt{\Large $\lrcorner$}}
\newcommand{\newrefformat}[2]{}
\newaliascnt{dia}{equation}
\crefname{dia}{Diagram}{Diagrams}
\newenvironment{diagram*}[1][]{%
    \begin{equation*}%
    \begin{tikzcd}[#1]%
    \refstepcounter{dia}
}{%
    \end{tikzcd}%
    \end{equation*}%
}
\crefname{dia}{Diagram}{Diagrams}
\theoremstyle{plain}   % This is the default, anyway
\newtheorem{thm}{Theorem}[section] % numbered theorem
\let\c@thm\c@thm\makeatother
\let\c@cor\c@thm\makeatother
\newtheorem{lem}[thm]{Lemma}
\let\c@lemma\c@thm\makeatother
\newtheorem{prop}[thm]{Proposition}
\let\c@prop\c@thm\makeatother
\let\c@claim\c@thm\makeatother
\theoremstyle{definition}
\newtheorem{df}[thm]{Definition}
\let\c@defn\c@thm\makeatother
\let\c@const\c@thm\makeatother
\newtheorem{notn}[thm]{Notation}
\let\c@notn\c@thm\makeatother
\let\c@outline\c@thm\makeatother
\let\c@propty\c@thm\makeatother
\let\c@problem\c@thm\makeatother
\let\c@conj\c@thm\makeatother
\theoremstyle{remark}
\newtheorem{rmk}[thm]{Remark}
\let\c@rem\c@thm\makeatother
\newtheorem{ex}[thm]{Example}
\let\c@ex\c@thm\makeatother
\let\c@observationn\c@thm\makeatother
\let\c@equation\c@thm
\numberwithin{equation}{section}
\crefname{lemma}{Lemma}{Lemmas}
\crefname{thm}{Theorem}{Theorems}
\crefname{defn}{Definition}{Definitions}
\crefname{notn}{Notation}{Notations}
\crefname{const}{Construction}{Constructions}
\crefname{prop}{Proposition}{Propositions}
\crefname{rem}{Remark}{Remarks}
\crefname{cor}{Corollary}{Corollaries}
\crefname{equation}{Equation}{Equations}
\crefname{ex}{Example}{Examples}
\crefname{propty}{Property}{Properties}
\crefname{problem}{Problem}{Problems}
\let\originalleft\left
\let\originalright\right
\renewcommand{\left}{\mathopen{}\mathclose\bgroup\originalleft}
\renewcommand{\right}{\aftergroup\egroup\originalright}
\begin{document}

\begin{abstract}
	The Pontryagin-Thom theorem gives an isomorphism between the cobordism group of framed $n$-dimensional manifolds, $\omega_n$, and the $n^{th}$ stable homotopy group of the sphere spectrum, $\pi_n(\Sph)$. The equivariant analogue of this theorem, gives an isomorphism between the equivariant cobordism group of $V$-framed $G$-manifolds, $\omega_V^G$, and the $V^{th}$ equivariant stable homotopy group of the $G$-sphere spectrum, $\pi_V^G(\Sph)$, for a finite group $G$ and a $G$-representation, $V$. In this paper, we explicitly identify the images of each element of $\omega_1^{C_2}$ and $\omega_\sigma^{C_2}$ in $\pi_1^{C_2}(\Sph)$ and $\pi_\sigma^{C_2}(\Sph)$ under the equivariant Pontryagin-Thom isomorphism.  
\end{abstract}

\maketitle

\setcounter{tocdepth}{2}
\tableofcontents

\parskip 2ex

\section{Introduction}

A framed manifold is a closed compact smooth manifold equipped with a trivialization of its stable tangent bundle. The Pontryagin-Thom isomorphism \cite{thom, milnor_diff_top} is an isomorphism between the cobordism group of framed $n$-manifolds equipped with a reference map to $X$, denoted $\omega_n(X)$, and the $n^{th}$ stable homotopy group of the suspension spectrum of $X_+$, denoted $\pi_n(\Sigma^\infty_+ X)$. This isomorphism was an early example of the efficacy of applying homotopical techniques to differential topology and paved the way for such seminal work as \cite{ kervaire_milnor, luck_surgery, hhr, wang_xu_61stem}. 

Throughout this paper, we will write $\omega_n$ to mean $\omega_n(*)$. After producing an isomorphism $\omega_n \cong \pi_n(\Sph)$, it is natural to choose generators and ask where each element of $\omega_n$ is sent in $\pi_n(\Sph)$. For instance, $\omega_0$ is generated, under disjoint unions, by a single point endowed with a positively oriented framing. This is mapped to a suspension of the identity map $S^0\to S^0$. The manifold $S^1$ endowed with its Lie group framing generates $\omega_1$, and is mapped to a suspension of the Hopf fibration under the isomorphism $\omega_1\cong\pi_1(\Sph)$. 

For $G$ a finite group, and $V$ a finite dimensional orthogonal real $G$-representation, the cobordism group of $V$-framed $G$-manifolds equipped with a map to a fixed $G$-space $X$ is denoted $\omega_V^G(X)$. These groups, along with suspension isomorphisms, form an $RO(G)$-graded homology theory in the sense of \cite{alaska}. The equivariant Pontryagin-Thom theorem, originally proved in \cite{Hausschild1974, kosniowski_bordism}, gives an isomorphism from $\omega_V^G(X)$ to the $V^{th}$ equivariant stable homotopy group of the suspension spectrum of $X_+$, denoted $\pi_V^G(\Sigma_+^\infty X)$. We will consistently refer to the treatment of this theorem appearing in \cite{williams_cobordism}. 

Observe that $\omega_0^G$ is the abelian group of finite $G$-sets endowed with positively or negatively oriented framings. So when $V=0$, the equivariant Pontryagin-Thom isomorphism recovers the fact that $\pi_0^G(\Sph)$ is isomorphic to the Burnside ring $A(G)$. The work at hand is focused on the 1-dimensional version of this result.

In this paper, we compute the image of each element of $\omega_V^{C_2}$ in $\pi_V^{C_2}(\Sph)$ under the equivariant Pontryagin-Thom isomorphism, for $V$ either the trivial $C_2$-representation or the sign representation. This computation differs from the non-equivariant setting in surprising and interesting ways. Having a clean geometric example should provide helpful data for those working in equivariant stable homotopy theory, equivariant algebraic $K$-theory, and motivic stable homotopy theory through its connection to $C_2$-equivariant homotopy theory.

\begin{notn}
Throughout this paper, we use $\sigma$ to denote the sign representation of $C_2$. The notation $n+k\sigma=\R^{n+k\sigma}$ denotes the $C_2$-representation formed by taking the direct sum of $n$ copies of the trivial representation and $k$ copies of the sign representation. For a $C_2$-representation $V$, use $D(V)$ and $S(V)$ to denote the unit disc and sphere in $V$. Use $S^V$ to denote the one point compactification of $V$. Observe that $S^V\cong S(V\oplus \R)$. 
\end{notn}

We will now mention the main results of this paper. Let $X$ be a space. Using the Atiyah-Hirzebruch spectral sequence, one can show that 
\[
\pi_1(\Sigma_+^\infty X) = H_0(X;\Z/2)\oplus H_1(X;\Z).
\]

Now identify 
\[
\pi_1^{C_2}(\Sph) \cong \pi_1(\Sph)\oplus H_0(BC_2;\Z/2)\oplus H_1(BC_2;\Z)\cong \Z/2^{\oplus 3}
\]
using the tom Dieck splitting and the above formula for $\pi_1$ of a suspension spectrum. We will use this decomposition of $\pi_1^{C_2}(\Sph)$ frequently throughout this paper. 

Every $\R$-framed $C_2$-manifold is a disjoint union of $C_2\times S^1$, $S^1$ (with trivial action), and $S(2\sigma)$ as seen in \cref{ex:framing}. Thus, it suffices to say where these manifolds, equipped with their various framings, are sent by the equivariant Pontryagin-Thom map. As will be made precise later, we embed these $C_2$-manifolds in a $C_2$-representation and define its number of ``framing twists" to be the number of times a trivialization of the normal bundle twists the fibers as we traverse the circle. Our first theorem is as follows. 

\begin{thm}\label{thm:theorem 1}
The equivariant Pontryagin-Thom isomorphism sends the elements of $\omega_1^{C_2}$ to $\pi_1^{C_2}(\Sph)\cong \Z/2^{\oplus 3}$ as depicted in the following table:

\begin{center}
\begin{tabular}{ |c | c | c | c| }
 \hline
  & $\pi_1(\Sph)$ & $H_0(BC_2;\Z/2)$ & $H_1(BC_2;\Z)$ \\
 \hline 
 $S^1$ & number of framing twists & 0 & 0 \\
 \hline 
 $C_2\times S^1$ & 0 & number of framing twists on $S^1$ & 0 \\ 
 \hline 
 $S(2\sigma)$ & 0 & $(\text{number of framing twists})+1$ & 1 \\
 \hline  
\end{tabular}
\end{center} 
\end{thm}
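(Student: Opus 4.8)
The plan is to use the geometric form of the tom Dieck splitting, as developed in \cite{williams_cobordism}, to reduce the equivariant Pontryagin--Thom class of each generator to two pieces of non-equivariant data: the framed bordism class of its $C_2$-fixed submanifold, landing in $\pi_1(\Sph)$, and the framed bordism class over $BC_2$ of the quotient of its free part, landing in $\omega_1(BC_2)\cong\pi_1(\Sigma^\infty_+ BC_2)$. Under the splitting $\Sigma^\infty_+ BC_2\simeq\Sph\vee\Sigma^\infty BC_2$ the latter group decomposes as $\pi_1(\Sph)\oplus\pi_1(\Sigma^\infty BC_2)$; the first summand is extracted by the augmentation $BC_2\to\ast$ and records the plain framed bordism class of the quotient circle (the $H_0$ column), while the associated graded of the second is $H_1(BC_2;\Z)$, detected by pushing forward the fundamental class of the quotient along its classifying map (the $H_1$ column). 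I would then fill in the three columns for the three generators $S^1$, $C_2\times S^1$, and $S(2\sigma)$ in turn.

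The $\pi_1(\Sph)$ column is immediate from fixed points. Only $S^1$ with trivial action has nonempty fixed set, namely all of $S^1$, and the fixed-point construction carries its $\R$-framing to the corresponding non-equivariant framing; since $S^1$ with its one-twist (Lie) framing generates $\pi_1(\Sph)\cong\Z/2$, a framing with $k$ twists contributes $k\bmod 2$, i.e.\ the number of framing twists. Both $C_2\times S^1$ and $S(2\sigma)$ are free, so their fixed sets are empty and they contribute $0$. The $H_1(BC_2;\Z)$ column is read off from the double cover $M\to M/C_2$ of the free part: for $C_2\times S^1$ this is the disconnected (trivial) cover, whose classifying map $S^1\to BC_2$ is null, giving $0$; for $S(2\sigma)$ it is the connected double cover $S(2\sigma)\to\RP^1$, classified by the nontrivial map, giving the generator $1$.

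The remaining $H_0$ column is the plain framed bordism class of the quotient circle, and here the two free generators diverge in an interesting way. For $C_2\times S^1$ the quotient is a copy of $S^1$ carrying the directly descended framing, so its class is simply the number of framing twists on $S^1$. For $S(2\sigma)$ the quotient is $\RP^1$, and the subtlety is that the equivariant framing $TS(2\sigma)\oplus\underline{U}\cong\underline{\R\oplus U}$ descends not to a trivialization into trivial bundles but into the flat bundles $f^{\ast}E_U$ over $\RP^1$ associated to the representation $U$. Because the classifying map $f$ is nontrivial, each $\sigma$-isotypic summand of $U$ contributes a M\"{o}bius line bundle, and comparing the descended framing to the product framing on the quotient forces an extra generator of $\pi_1(SO)$; concretely, the normal framing of $S(2\sigma)\subset 2\sigma$ winds once more than the framing of $\RP^1$ inside its flat normal bundle. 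This produces the ``$+1$'', so the entry is $(\text{number of framing twists})+1$.

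The main obstacle is exactly this last comparison: verifying rigorously that descent through the nontrivial flat bundle adds precisely one twist, and not zero or two. I expect to carry this out by fixing explicit trivializations and comparing clutching functions over $\RP^1\cong S^1$---tracking the isomorphism $\gamma\oplus\gamma\cong\underline{\R^2}$, with $\gamma$ the M\"{o}bius bundle, as a loop in $SO(2)$ and reducing its class in $\pi_1(SO)\cong\Z/2$---or equivalently by writing down the explicit $C_2$-map $S^{\R\oplus W}\to S^W$ representing $S(2\sigma)$ and comparing its free part with the known generator of $\pi_1(\Sigma^\infty BC_2)$. The bookkeeping of orientations and half-twists in this step is where the genuine content of the theorem lies.
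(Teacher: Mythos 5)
Your global strategy coincides with the paper's: both use the geometric tom Dieck splitting of $\omega_1^{C_2}$ as $\omega_1\oplus\omega_1(BC_2)$ and the further splitting of $\omega_1(BC_2)\cong H_0(BC_2;\Z/2)\oplus H_1(BC_2;\Z)$ to separate the three columns, and your treatment of the $\pi_1(\Sph)$ and $H_1(BC_2;\Z)$ columns, as well as the $H_0$ entry for $C_2\times S^1$, is essentially the argument the paper gives. Where you genuinely diverge is on the $H_0(BC_2;\Z/2)$ entry for $S(2\sigma)$, which is the real content of the theorem. The paper handles it in three steps: (i) the tangent/normal conversion of its Section 3 identifies the ``zero-twist'' equivariant normal framing $S(2\sigma)_0$ with the Lie group trivialization of $TS(2\sigma)$; (ii) a commuting square relating $(-)/C_2$ on cobordism to the transfer on homotopy groups shows that the $H_0$ image is the class of the quotient circle with its descended Lie group framing, i.e.\ the Hopf class $1$; (iii) an explicit equivariant cobordism $S(2\sigma)_n\sim S(2\sigma)_0\amalg(C_2\times S^1_n)$, built from $S(2\sigma)\times I$ with two swapped disks removed, reduces general $n$ to $n=0$. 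You instead propose to descend the framing to $\RP^1$ directly and compare clutching functions for $\gamma\oplus\gamma\cong\underline{\R^2}$. That route can be made to work and is arguably more self-contained, but as written it is a plan rather than a proof: you explicitly defer the verification that descent contributes exactly one extra twist, and that verification \emph{is} the theorem for this entry, so the proposal has a genuine gap at its crux.

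Two cautions if you carry out your version. First, since the paper's framings are stable tangential trivializations stabilized only by \emph{trivial} representations, the descended tangential framing of $\RP^1$ lands in honest trivial bundles; the M\"obius/flat-bundle bookkeeping you describe only enters if you instead descend the normal framing of $S(2\sigma)\subset\R^{1+2\sigma}$, whose equivariant normal bundle is $2\sigma$ and whose quotient is $\gamma\oplus\gamma$. You should commit to one of these two pictures and not mix them, or the parity of the extra twist will be easy to get wrong. Second, you need the degree count that makes the answer $n+1$ rather than $2n+1$: an equivariant map $S(2\sigma)\to SO(2)$ necessarily has even degree $2n$ and factors through a degree $n$ map on $\RP^1$, so the descended framing of $S(2\sigma)_n$ differs from that of $S(2\sigma)_0$ by $n$ twists.
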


We will define precisely what we mean by framing twists later in the paper.

Our second theorem concerns the $C_2$-equivariant stable stem indexed by the sign representation. It is not hard to compute that $\pi_\sigma^{C_2}(\Sph)\cong \Z$ by using the homotopy cofiber sequence
\[
{C_2}_+\to S^0\to S^\sigma.
\] 
As shown in \cite{morel, dugger_isaksen}, this group is generated by the equivariant Hopf map 
\[
S^{1+2\sigma}\to S^{1+\sigma}.
\]
This is of particular interest as the non-equivariant Hopf map is stably of order 2, while its equivariant analogue is of infinite order. We will give an interpretation of this notable difference between equivariant and non-equivariant homotopy theory in terms of framed manifolds at the end of the paper. 

Any $\sigma$-framed $C_2$-manifold is a disjoint union of $C_2\times S^1$ and $S(1+\sigma)$ as seen in \cref{ex:framing}.

\begin{thm}\label{thm:theorem 2}
Under the equivariant Pontryagin-Thom isomorphism $\omega_\sigma^{C_2}\to \pi_\sigma^{C_2}(\Sph)$, the manifold $C_2\times S^1$ is sent to the trivial element. The manifold $S(1+\sigma)$ is sent to either 0 or 1 in $\Z\cong\pi_\sigma^{C_2}(\Sph)$ depending on the parity of the number of framing twists on $S^1$.
\end{thm}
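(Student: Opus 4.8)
The plan is to detect all classes through the geometric fixed point homomorphism $\Phi^{C_2}\colon \pi_\sigma^{C_2}(\Sph)\to\pi_0(\Sph)\cong\Z$, which is well-defined since $\sigma^{C_2}=0$. First I would normalize this map on the generator: geometric fixed points send the equivariant Hopf map $h\colon S^{1+2\sigma}\to S^{1+\sigma}$ to the map on fixed spheres $S^1\to S^1$, and since $h$ is modeled by the complex Hopf construction on $\C=1+\sigma$ (with $C_2$ acting by conjugation), its restriction to real points is the double cover $S(\R^2)\to\RP^1$, of degree $2$. As $h$ generates $\pi_\sigma^{C_2}(\Sph)\cong\Z$, the homomorphism $\Phi^{C_2}$ is multiplication by $2$, hence injective; so a class is determined by its geometric fixed point. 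The remaining ingredient, which I would extract from the geometric description of the Pontryagin--Thom map in \cite{williams_cobordism}, is that $\Phi^{C_2}$ intertwines the equivariant Pontryagin--Thom class of a $\sigma$-framed $C_2$-manifold $M$ with the non-equivariant Pontryagin--Thom class of the fixed manifold $M^{C_2}$, framed by the fixed part of the framing.

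For $C_2\times S^1$ the fixed set is empty, so its fixed Pontryagin--Thom class is $0$; by injectivity of $\Phi^{C_2}$ the class is trivial, independent of framing. This disposes of the first assertion.

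For $S(1+\sigma)$ the fixed set is the two poles $\{N,S\}=\{(\pm 1,0)\}$, a framed $0$-manifold, so its fixed Pontryagin--Thom class is the signed count $\epsilon_N+\epsilon_S\in\{-2,0,2\}\subset\Z$, where $\epsilon_N,\epsilon_S\in\{\pm 1\}$ record the orientations of the induced framing at the poles relative to a chosen orientation of $(S^{1+\sigma})^{C_2}=S^1$. I would first anchor this count: the untwisted framing is the boundary framing of the equivariant disc $D(1+\sigma)$, so $S(1+\sigma)$ with $0$ twists is equivariantly null-cobordant and maps to $0$; in particular $\epsilon_N=-\epsilon_S$ in that case. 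Next, stabilizing the (rank-one) normal bundle so that twisting is defined, I would track the effect of $k$ framing twists in the normal directions of the circle: equivariance forces the normal framing along a fundamental arc from $N$ to $S$ to determine that along its image, and the parity of $k$ governs whether the induced orientations at $N$ and $S$ agree. Anchored by the $k=0$ case, this yields $\epsilon_N+\epsilon_S=0$ for even $k$ and $\pm 2$ for odd $k$, so $\Phi^{C_2}$ of the class equals $2(k\bmod 2)$. Dividing by $2$ through the injection $\Phi^{C_2}$, the Pontryagin--Thom class is $(k\bmod 2)$ times the Hopf generator, i.e.\ the number of framing twists modulo $2$ in $\Z$. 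As a check, the restriction homomorphism $\pi_\sigma^{C_2}(\Sph)\to\pi_1(\Sph)\cong\Z/2$ sends the generator to $\eta$, while the underlying framed circle of $S(1+\sigma)$ with $k$ twists represents $k\eta$, confirming the parity independently.

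The hard part will be this parity computation: making precise how a twist in the $\sigma$-normal directions, which do not survive to the fixed set, nonetheless flips the relative orientation of the induced framings at the two poles. I would handle it by fixing an explicit equivariant normal framing near each pole --- constrained by the local representation, where the tangent direction is $\sigma$ and the normal direction is trivial --- and computing the winding of the framing field along the fixed-point-free interior of the arc, thereby reducing the comparison of $\epsilon_N$ and $\epsilon_S$ to the parity of this winding number.
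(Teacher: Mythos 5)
Your proposal is correct, and for the $S(1+\sigma)$ half it is essentially the paper's own argument transported from cobordism to homotopy: the paper detects classes through the map $\omega_\sigma^{C_2}\to\omega_0$ that takes $C_2$-fixed points of a $\sigma$-framed manifold, observes that it sends the generator $S(1+\sigma)_1$ (the Lie group framing, shown in the preceding proposition to map to the equivariant Hopf fibration) to $2\in\Z\cong\omega_0$, concludes injectivity, and then reads off the signed fixed-point count of $S(1+\sigma)_n$ as $2$ or $0$ according to the parity of $n$. That is your geometric-fixed-points detection in cobordism language; in fact the paper simply asserts the parity of the signed count, so the winding-number analysis you flag as ``the hard part'' would make your version more complete rather than less. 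The genuine divergence is the $C_2\times S^1$ case: instead of combining injectivity of the fixed-point map with emptiness of the fixed set, the paper notes that $C_2\times S^1$ lifts to $\omega_\sigma^{C_2}(EC_2)$ and shows that this entire group vanishes via the long exact sequence induced by ${C_2}_+\to S^0\to S^\sigma$, using that the forgetful maps $\omega_0^{C_2}(EC_2)\to\omega_0(EC_2)$ and $\omega_1^{C_2}(EC_2)\to\omega_1(EC_2)$ are injective and surjective respectively. Your route is more uniform, disposing of both manifolds with a single detection tool, but it leans on the compatibility of geometric fixed points with the Pontryagin--Thom construction, which you would need to state and justify from \cite{williams_cobordism}; the paper's argument for $C_2\times S^1$ is independent of the generator computation and of any such compatibility, which is why it can be run first and proves the stronger statement that $C_2\times S^1$ is already null-cobordant over $EC_2$.
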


\begin{rmk}
In \cite{mcginnis_thesis}, McGinnis develops relations in the cobordism groups $\omega_V^{C_2}$ and the $C_2$-cobordism ring mirroring those in the $RO(C_2)$-graded stable stems. Combining these relations with the results of this paper would help to illuminate more of the structure of the $C_2$-equivariant framed cobordism groups and consequently, the equivariant stable stems. 
\end{rmk}

\subsection{Organization}

In section 2, we give the relevant background material on equivariant framed cobordism, and the Pontryagin-Thom isomorphism. In section 3, we discuss converting trivializations of tangent bundles to those of normal bundles. In section 4, we prove \cref{thm:theorem 1} giving framed manifold generators for $\pi_1^{C_2}(\Sph)$. In section 5, we prove \cref{thm:theorem 2} giving framed manifold generators of $\pi_\sigma^{C_2}(\Sph)$. 

\subsection{Acknowledgments}

The author thanks Cary Malkiewich, Tommy Brazelton, David Chan, Jesse Keyes, David Mehrle, Ben Spitz, and J.D. Quigley for helpful conversations and their encouragement to pursue this project. The author would also like to thank the anonymous referee for their helpful feedback. The author was partially supported by NSF FRG grant DMS-2052923. This work represents a portion of the author's PhD thesis written under the supervision of Cary Malkiewich at Binghamton University. 

\section{Preliminaries}

\subsection{Equivariant framed cobordism}

In this section, we give the necessary background to prove the main theorems.

\begin{df}
A $C_2$-manifold is a smooth compact manifold equipped with a smooth action of $C_2$. 
\end{df}

Given a $C_2$-manifold, $M$, the action on the tangent space is given by $g\cdot(x,v) = (gx,dg(v))$ where $dg$ is the map induced on the tangent space by the action of $g\in C_2$. 

\begin{df}\label{df:framing}
Let $M$ be a $C_2$-manifold and $V$ a real orthogonal $C_2$-representation. A $V$-framing of $M$ is an equivalence class of $C_2$-equivariant vector bundle isomorphisms
\[
TM\oplus \left(M\times \R^k\right)\cong M\times \left(V\oplus\R^k\right).
\]
We call two such isomorphisms equivalent if they are $C_2$-homotopic. Moreover, we say that 
\[
TM\oplus \left(M\times \R^{k+1}\right)\cong M\times \left(V\oplus\R^{k+1}\right)
\]
is equivalent to 
\[
TM\oplus \left(M\times \R^k\right)\cong M\times \left(V\oplus\R^k\right)
\]
if it is obtained by extending to the identity in the $\left(k+1\right)^{\text{st}}$ coordinate. 
\end{df}

In the above definition, we allow stabilization by trivial $C_2$-representations rather than arbitrary $C_2$-representations. This gives rise to the cobordism theory represented by the sphere spectrum and is therefore the definition we will use in this paper.

\begin{ex}\label{ex:framing}
The unit sphere in $\R^{2\sigma}$, denoted $S(2\sigma)$, is $\R$-framed. If $g$ is the nontrivial element of $C_2$ and $(x,v)\in TS(2\sigma)$ then $g\cdot(x,v)=(-x,v)$ so that $TS(2\sigma)\cong S(2\sigma)\times \R$. On the other hand, $S(1+\sigma)\cong S^\sigma$ is $\sigma$-framed. Model $S(1+\sigma)$ as the unit sphere in $\C$ with $C_2$-action given by complex conjugation. If $g$ is the nontrivial element of $C_2$ and $(z,v)\in TS(1+\sigma)$, then $g\cdot (z,v) = (\bar{z},-v)$ so that $TS(1+\sigma)\cong S(1+\sigma)\times \sigma$.
\end{ex}

\begin{df}
Let $M$ be a $C_2$-manifold and
\[
TM\oplus \left(M\times \R^k\right)\cong M\times \left(V\oplus\R^k\right)
\]
a $V$-framing of $M$. Use $-M$ to denote $M$ with the framing extended to
\[
TM\oplus \left(M\times\R^{k+1}\right)\cong M\times \left(V\oplus\R^{k+1}\right)
\]
by sending the $(k+1)^{\text{st}}$ coordinate to its negative.
\end{df}

\begin{df}
Two $V$-framed $C_2$-manifolds, $M$ and $N$, are cobordant if there exists a $(V\oplus\R)$-framed $C_2$-manifold, $W$, such that $\partial W \cong  M\amalg -N$ and the restriction of the framing on $W$ to its boundary induces the framing on $M\amalg -N$. The induced framing is given by pulling back the framing on the cobordism along the inclusion of the boundary.
\end{df}

\begin{ex}
As a $\sigma$-framed manifold, $C_2\times S^1$ with its Lie group framing is null-cobordant via the product of $S^1$ and the unit disk in $\sigma$. On the other hand, as we will prove later, $C_2\times S^1$ represents a non-trivial cobordism class as an $\R$-framed manifold. The null-cobordism we used before was specifically $(\sigma\oplus\R)$-framed. 
\end{ex}

\begin{rmk}
Denote the cobordism group of $\R$-framed $C_2$-manifolds as $\omega_1^{C_2}$. This group is isomorphic to $\Z/2^{\oplus 3}$ which may be computed either geometrically or homotopically although the geometric argument is more intricate than the non-equivariant version. We may choose a generating set of $\omega_1^{C_2}$ so that the underlying $C_2$-manifolds of the generators are $S^1$, $C_2\times S^1$, and $S(2\sigma)$. Denote cobordism group of $\sigma$-framed $C_2$-manifolds as $\omega_\sigma^{C_2}$ which is isomorphic to $\Z$. A generator may be chosen so that its underlying $C_2$-manifold is $S(1+\sigma)$. 
\end{rmk}

The following definition will be useful later for constructing specific framings. 

\begin{df}\label{df:orthogonal}
Let $V$ be a $C_2$-representation. Define $SO(V)$ to be the space of orientation preserving orthogonal transformations from $V$ to itself with $C_2$-action given by conjugation.
\end{df}

\subsection{The equivariant framed Pontryagin-Thom isomorphism}

We will give a short exposition of the equivariant Pontryagin-Thom construction tailored to the paper at hand. In what follows, let $V$ be $\R$ or $\sigma$. 

If $M\in \omega_V^{C_2}$, then $M$ embeds into $\R^{2+2\sigma}\oplus V$ so that the normal bundle of the embedding is $M\times \R^{2+2\sigma}$. A proof of a more general statement appears as \cite[Lemma 2.44]{williams_cobordism}.

The Pontryagin-Thom map, $\omega_V^{C_2}\to \pi_V^{C_2}(\Sph)$ is defined as follows. Let $M\in \omega_V^G$. Embed $M$ in  $\R^{2+2\sigma}\oplus V$ so that $\nu\left(M,\R^{2+2\sigma}\oplus V\right)\cong M\times \R^{2+2\sigma}$. Denote this normal bundle as $\nu$. Then the image of $M$ in $\pi_V^{C_2}(\Sph)$ is the following composition. 

\begin{align*}
S^{(2+2\sigma)\oplus V}
& \rightarrow D(\nu)/S(\nu)\\
& \rightarrow \left(M\times D\left(\R^{2+2\sigma}\right)\right)/\left(M\times S\left(\R^{2+2\sigma}\right)\right)\\
& \rightarrow D\left(\R^{2+2\sigma}\right)/S\left(\R^{2+2\sigma}\right) \cong S^{2+2\sigma}.
\end{align*} 
The first map is a Pontryagin-Thom collapse map, the second comes from the trivialization of $\nu$, and the third is induced by $M\to *$. If $M$ came equipped with an equivariant map to a $G$-space $X$, then the final map would be induced by $M\to X$ and the target would be $S^{2+2\sigma}\wedge_+ X$. As discussed in \cite{williams_cobordism}, this map is an isomorphism. 

\iffalse
\begin{rmk}
In the above construction, we chose a particular embedding of $M$ in $\R^{2+2\sigma}\oplus V$. Choosing a different embedding (with isomorphic normal bundle) would not affect the resulting class in $\pi_V^{C_2}(\Sph)$. Since we are working stably, we may increase the dimension of the ambient space of the embedding without changing the corresponding stable homotopy class in $\pi_V^{C_2}(\Sph)$. All embeddings of $M$ into a sufficiently high dimensional $C_2$-representation are equivariantly isotopic. Thus, our choice of embedding does not matter up to stabilization. 
\end{rmk}
\fi

\begin{rmk}
In the above construction, we chose a particular embedding of $M$ in $\R^{2+2\sigma}\oplus V$. Choosing a different embedding (with isomorphic normal bundle) does not affect the resulting class in $\pi_V^{C_2}(\Sph)$. Since we are working stably, we may increase the dimension of the ambient space of the embedding without changing the corresponding stable homotopy class in $\pi_V^{C_2}(\Sph)$. Any two equivariant embeddings $i,j\colon M \to W$, into some $C_2$-representation $W$, are equivariantly isotopic after stabilizing by $W$. This can be seen by concatenating the following equivariant isotopies:
\begin{align*}
M\times I & \rightarrow W\oplus W & M\times I &\rightarrow W\oplus W\\
(m,t)&\mapsto ((1-t)i(m),tj(m)) & (m,t)&\mapsto (tj(m),(1-t)j(m)).
\end{align*}
Thus, our choice of embedding does not matter up to stabilization. 
\end{rmk}

\section{Converting Tangent and Normal Bundle Trivializations}\label{subsec:framing}

In this subsection, we discuss compatibilities between trivializations of normal and tangent bundles. These compatibilities play an important role in the proofs in this paper. 

Recall that in \cref{df:framing} we defined a $V$-framing to be an equivariant trivialization of the stable tangent bundle
\begin{equation}\label{eq:framing}
TM\oplus\left(M\times\R^k\right)\cong M\times\left(V\oplus\R^k\right).
\end{equation}
However, the equivariant Pontryagin-Thom map takes as input a $V$-framed $C_2$-manifold equipped with a trivialization of its stable normal bundle. Given a $V$-framed $G$-manifold, we obtain a trivialization of its stable normal bundle by summing both sides of \cref{eq:framing} with the normal bundle. This is a major advantage of working with stable bundles as the same does not hold unstably. 

Unfortunately, there is not a bijective correspondence between equivariant trivializations of stable tangent and stable normal bundles because we only allow allow stabilization by trivial $G$-representations. Work of Waner suggests that allowing stabilization by non-trivial $G$-representations gives rise to the same $RO(G)$-graded cohomology theory despite the fact that these two notions of framing are not equivalent \cite{waner_1984_eq_bordism}. We plan to pursue this line of thinking in future work. 
 
\begin{df}
Let $M$ be a $V$-framed $C_2$-manifold and embed it in some $C_2$-representation, $W$. A trivialization of $TM$ and $\nu(M,W)$ are compatible if they sum to the canonical trivialization of $TW|_M$ (possibly after stabilization). 
\end{df}

Let $TS^1\cong S^1\times\R$ be the Lie group trivialization. Now trivialize
\begin{equation}\label{eq:first_triv}
TS^1\oplus (S^1\times \R) \cong S^1\times\R^2
\end{equation}
by extending by the identity in the last coordinate. Relative to this trivialization, any other trivialization of $TS^1\oplus (S^1\times \R)$ is given by the homotopy class of a map $S^1\to SO(2)$. The set of such maps can be identified with $\Z$ by taking the degree. 

We now compute which stable trivialization of $\nu\left(S^1,\R^3\right)\oplus \left(S^1\times \R^2\right)$ is induced by each of the $\Z$ many framings just described. Because we are only interested in stable trivializations, it suffices to say which trivializations of $\nu\left(S^1,\R^3\right)\oplus \left( S^1\times\R^2 \right)$ are induced by the degree 0 and 1 maps $S^1\to SO(2)$. This is because the degree $n$ map $S^1\to SO(2)$ and the degree $(n\mod 2)$ map become homotopic after stabilizing to a map $S^1\to SO(k)$ for $k\geq 3$.

We now define several trivializations of $\nu\left(S^1,\R^3\right)\oplus \left( S^1\times\R^2 \right)$. Embed $S^1$ as the unit circle in the $xy$-plane of $\R^3$ where we identify $0\in S^1$ with $(1,0,0)\in\R^{3}$ and $\pi/2$ with $(0,1,0)$. 

\begin{df}\label{df:std}
Define a map $\R^2\to \nu(S^1,\R^3)_\theta$ by sending the standard basis of $\R^2$ to the image, under the rotation by $\theta$ linear transformation, of the positive unit $z$ vector and the outward radial unit vector. This gives the standard trivialization of $\nu\left( S^1, \R^3\right)$.
\end{df}

Any other trivialization of $\nu\left( S^1,\R^3\right)$ is defined by a map $S^1\to SO(2)$. 

\begin{prop}
The trivialization of $TS^1\oplus (S^1\times\R)$ from \cref{eq:first_triv} induces the trivialization of $\nu\left( S^1,\R^3\right)$ given by applying the degree 1 map $S^1\to SO(2)$ to the standard trivialization. 
\end{prop}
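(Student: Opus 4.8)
The plan is to carry out a direct computation with explicit frames, tracking winding numbers. First I would fix the embedding of $S^1$ as the unit circle in the $xy$-plane, so that the point at angle $\theta$ is $(\cos\theta,\sin\theta,0)$, the unit tangent vector realizing the Lie group framing is $t(\theta)=(-\sin\theta,\cos\theta,0)$, and the fiber $\nu\left(S^1,\R^3\right)_\theta$ is spanned by the outward radial vector $r(\theta)=(\cos\theta,\sin\theta,0)$ and the vertical vector $e_3=(0,0,1)$. Under this description the framing of \cref{eq:first_triv} becomes the explicit frame $(t(\theta),u)$ of $TS^1\oplus(S^1\times\R)$, where $u$ denotes the identity section of the stabilizing coordinate.

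Next I would apply the conversion recipe of \cref{subsec:framing}: sum the tangent framing with the identity on $\nu$ and rewrite the resulting left-hand side using the canonical trivialization of $T\R^3|_{S^1}\cong S^1\times\R^3$ coming from the standard coordinates on $\R^3$. This expresses the induced stable normal framing of $\nu\left(S^1,\R^3\right)\oplus\left(S^1\times\R^2\right)$ as an explicit orthonormal frame whose entries are assembled from $t(\theta)$, $r(\theta)$, $e_3$, and the stabilizing vectors. I would then compare this induced frame against the standard normal framing of \cref{df:std}; the comparison is a loop $S^1\to SO(2)$ (stably a loop into $SO$), and the content of the proposition is precisely that this loop is the degree $1$ map.

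The heart of the argument is the observation that the unit tangent vector makes exactly one full revolution in the $xy$-plane as $\theta$ runs from $0$ to $2\pi$; concretely $t(\theta)=R_\theta e_2$ and $r(\theta)=R_\theta e_1$, where $R_\theta$ is rotation by $\theta$ about the $z$-axis. Because the canonical trivialization of $T\R^3|_{S^1}$ is constant while the tangent direction rotates once, the induced normal framing is forced to absorb this single rotation relative to the standard framing. Tracking this rotation through the recipe shows the comparison loop winds once, which is the claim.

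The main obstacle I anticipate is the bookkeeping of signs, orderings, and the two stabilizing coordinates needed to match conventions so that the winding comes out to exactly $+1$ rather than $-1$ or $0$. In particular, one must carefully separate the winding contributed by the rotation of the tangent direction in the ambient $\R^3$ from the winding already built into the standard normal framing of \cref{df:std}, and confirm that the orientations are compatible. Once the explicit frames are written down this reduces to computing a single winding number; the stabilization remark preceding the statement guarantees that only the parity of this number matters for the stable conclusion, while the explicit frame pins the representative down as the degree $1$ map.
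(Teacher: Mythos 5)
Your proposal is correct and follows essentially the same route as the paper: both arguments start from the canonical (constant) trivialization of $T\R^3|_{S^1}$, rotate it so that one basis vector aligns with the Lie group tangent frame (the paper packages this as the explicit matrix $F$ in \cref{eq:deformation}, which is exactly your $R_\theta$ bookkeeping), and read off that the induced normal frame winds once relative to the standard trivialization of \cref{df:std} because the tangent direction makes one full revolution in the ambient space. The sign/ordering caveats you flag are precisely the content the paper delegates to the explicit matrix and \cref{fig:tangent,fig:normal}.
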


\begin{proof}
In this case, it suffices to say which trivialization of $\nu \left( S^1,\R^3 \right)$ is induced by the Lie group trivialization of $TS^1$ prior to stabilizing.  

Begin by taking the canonical trivialization of $T\R^{3}|_{S^1}$. Now obtain a fiberwise homotopic trivialization by applying the transformation $F\colon T_\theta\R^{3}|_{S^1}\to T_\theta\R^{3}|_{S^1}$ where
\begin{equation}\label{eq:deformation}
F=\begin{bmatrix}
\cos^2(\theta) & \sin(\theta)\cos(\theta) & -\sin(\theta)\\
\sin(\theta)\cos(\theta) & \sin^2(\theta) & \cos(\theta)\\
\sin(\theta) & -\cos(\theta) & 0
\end{bmatrix}.
\end{equation}

This transformation rotates the vector $(0,0,1)$ in each fiber to the unit tangent vector of $S^1$ and orthogonally extends to the other two standard basis vectors of $T_\theta\R^3|_{S^1}$ as depicted in \cref{fig:tangent,fig:normal}. Restricting to the image of the unit $x$ and $y$ vectors gives a trivialization $\nu\left(S^1,\R^3\right)\cong S^1\times \R^2$.
 
\begin{figure}[h]
\minipage{0.48\textwidth}
  \includegraphics[width=\linewidth]{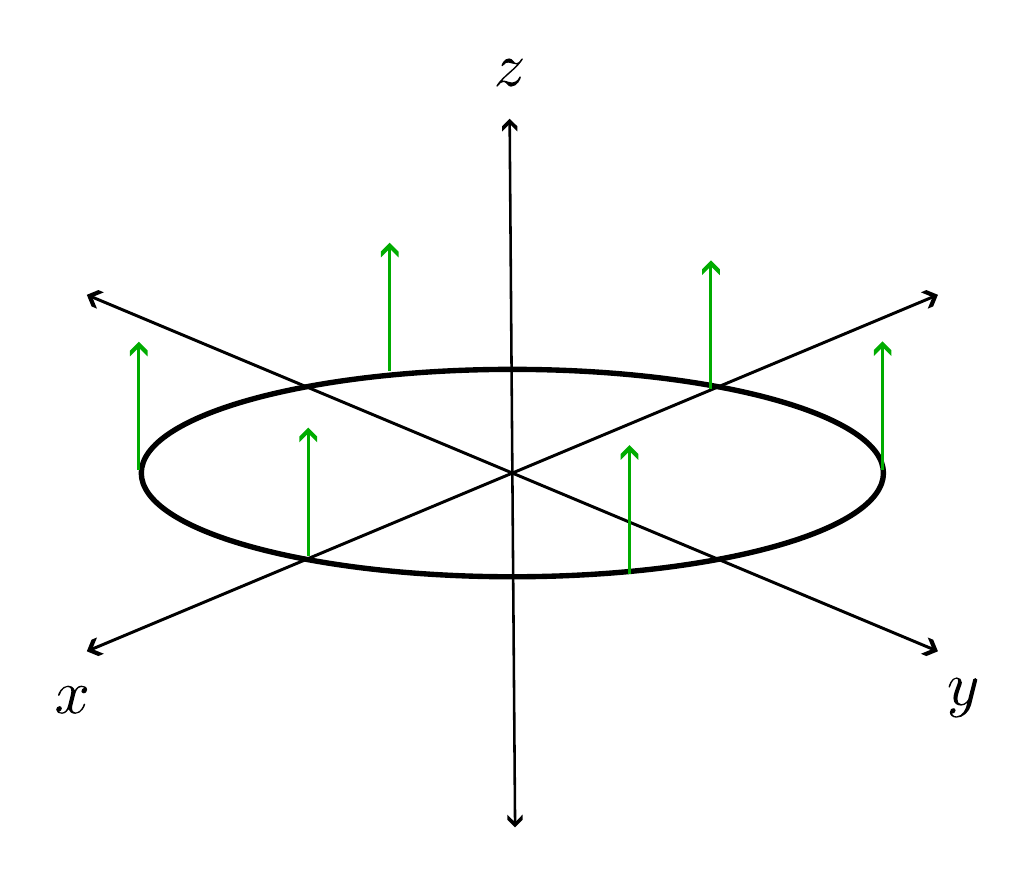}
  \captionsetup{labelsep=newline}
\endminipage\hfill
%\minipage{0.32\textwidth}
  %\includegraphics[width=\linewidth]{images/tangent1}
  %\captionsetup{labelsep=newline}
%\endminipage\hfill
\minipage{0.48\textwidth}%
  \includegraphics[width=\linewidth]{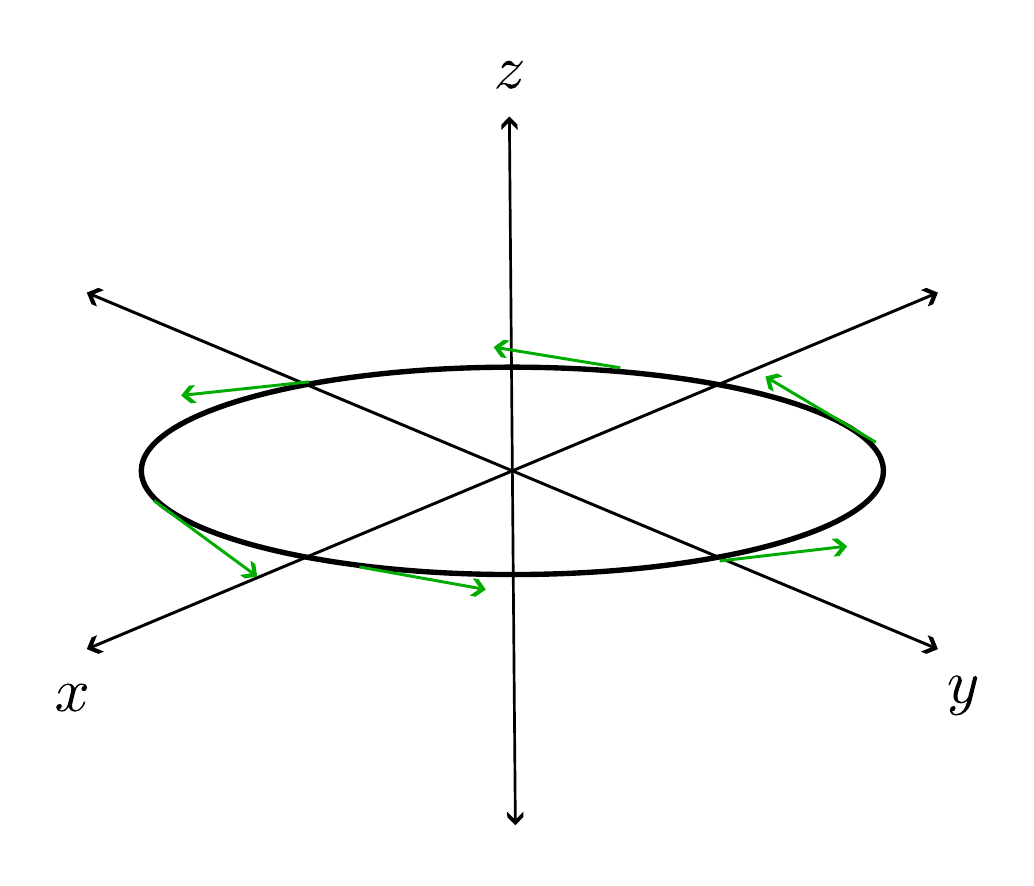}
  \captionsetup{labelsep=newline}  
\endminipage
\caption{The action of $F$ on the unit $z$ vectors in $T\R^{3}|_{S^1}$}\label{fig:tangent}
\end{figure}

\begin{figure}[h]
\minipage{0.48\textwidth}
  \includegraphics[width=\linewidth]{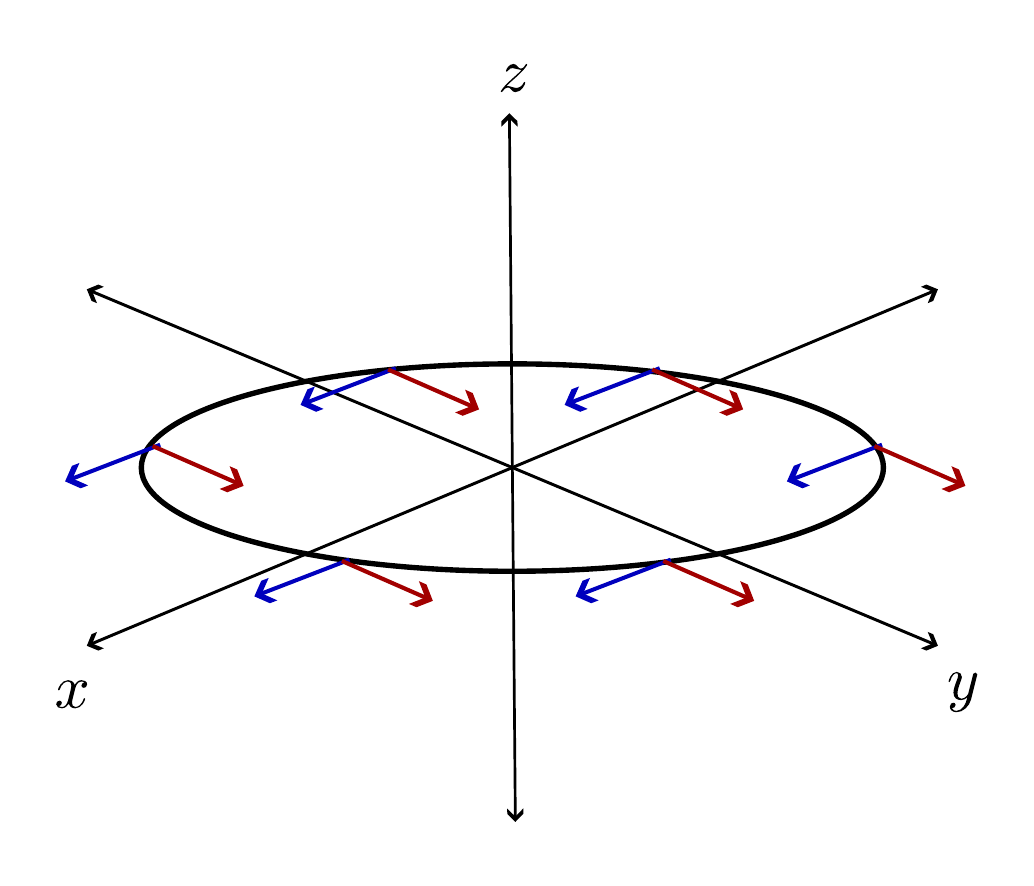}
  \captionsetup{labelsep=newline}
\endminipage\hfill\minipage{0.48\textwidth}
  \includegraphics[width=\linewidth]{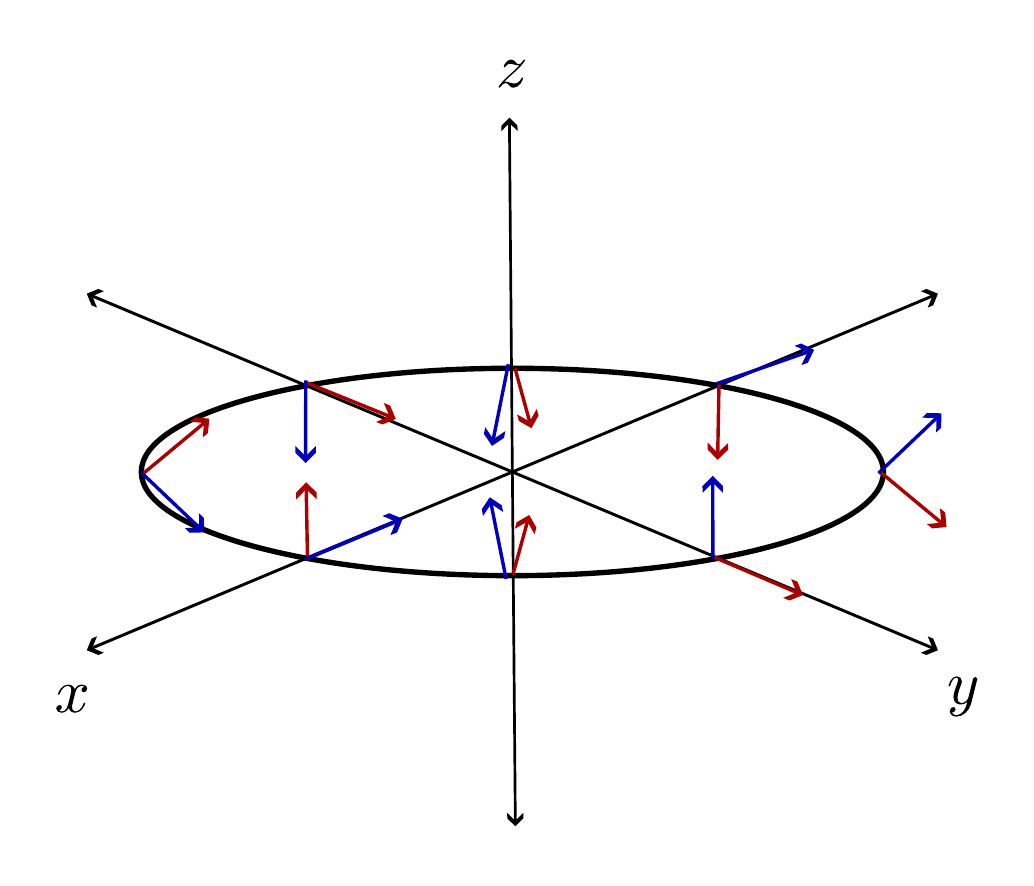}
  \captionsetup{labelsep=newline}
\endminipage
\caption{The action of $F$ on the unit $x$ and $y$ vectors in $T\R^{3}|_{S^1}$}\label{fig:normal}
\end{figure} 

Therefore, summing the Lie group framing of $S^1$ with the trivialization of $\nu\left(S^1,\R^3\right)$ described above gives rise to a map 
\[
T\R^3|_{S^1} \cong TS^1\oplus \nu\left(S^1,\R^3\right)\cong S^1\times\R^3
\]
which is fiberwise homotopic to the canonical trivialization $T\R^3|_{S^1}\cong S^1\times \R^3$ as desired. 
\end{proof}

\begin{prop}
The trivialization of $TS^1\oplus(S^1\times \R)$ defined by applying the degree 1 map $S^1\to SO(2)$ to \cref{eq:first_triv} induces the stabilization of the standard trivialization of $\nu\left( S^1,\R^3\right)\oplus \left(S^1\times\R^2\right)$ from \cref{df:std}.
\end{prop}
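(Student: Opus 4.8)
The plan is to deduce this from the previous proposition together with a conservation principle for framing twists, rather than by producing another explicit deformation. Recall from the definition of compatible trivializations that a tangent framing of $TS^1\oplus(S^1\times\R)$ and a normal framing of $\nu\left(S^1,\R^3\right)\oplus\left(S^1\times\R^2\right)$ induce one another precisely when their direct sum is fiberwise homotopic to the canonical trivialization of $T\R^3|_{S^1}$ after stabilizing. The previous proposition identifies one such compatible pair: the untwisted framing of \cref{eq:first_triv} (degree $0$) is compatible with the degree $1$ twist of the standard trivialization of \cref{df:std}. I would record both framings as loops into the orthogonal group relative to the canonical ambient trivialization, so that the compatibility condition becomes the statement that the block-diagonal sum of the two loops is null-homotopic.

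The key observation is that the induced-framing correspondence reverses the parity of the twist. First I would note that for $N\geq 3$ the inclusion $SO(2)\hookrightarrow SO(N)$ along any $2$-plane induces the reduction map $\Z=\pi_1(SO(2))\to\pi_1(SO(N))=\Z/2$, independently of which plane is chosen; this is exactly the stabilization phenomenon already invoked when reducing to the degree $0$ and degree $1$ cases. Consequently a degree $1$ twist applied in the tangent plane $TS^1\oplus\R$ and a degree $1$ twist applied in the normal plane $\nu\left(S^1,\R^3\right)$ represent the same nontrivial class in $\pi_1(SO(N))$. Since the sum of the tangent and normal loops must remain null-homotopic for compatibility to persist, twisting the tangent framing by the degree $1$ map forces the compatible normal framing to change by the nontrivial class of $\pi_1(SO(N))$ as well, that is, to flip its stable parity.

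Applying this to the compatible pair from the previous proposition then completes the argument: the untwisted tangent framing was compatible with the odd (degree $1$) normal framing, so the degree $1$ twist of \cref{eq:first_triv} is compatible with the even normal framing, which stably is exactly the standard trivialization of \cref{df:std}. I would emphasize that only the parity matters here, since the degree $m$ and degree $m\bmod 2$ maps $S^1\to SO(2)$ agree after stabilizing into $SO(N)$ for $N\geq 3$, so the sign ambiguity in twisting by $\pm 1$ is irrelevant. The main obstacle is making this conservation principle precise: one must set up the block-diagonal decomposition of the stabilized ambient bundle carefully and verify that twisting in the tangent summand and twisting in the normal summand contribute the same class to $\pi_1(SO(N))$, so that their effects genuinely cancel modulo $2$. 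Once this bookkeeping is in place, the result follows from the previous proposition with no further computation.
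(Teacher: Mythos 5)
Your argument is correct, and it reaches the paper's conclusion by a slightly different route. The paper's proof is a direct computation: it writes the induced trivialization of $\nu\left(S^1,\R^3\right)\oplus\left(S^1\times\R^2\right)$, relative to the stabilized standard one, as the loop $\theta\mapsto\left(\begin{smallmatrix}\underline{\theta}&0\\0&\underline{\theta}\end{smallmatrix}\right)$ in $SO(4)$, factors that matrix explicitly as the square of a representative of the generator of $\pi_1(SO(4))\cong\Z/2$, and kills it by Eckmann--Hilton. You instead treat the previous proposition as a black box and invoke a conservation principle: the block sum of the tangent and normal deviation loops lies in a fixed class of $\pi_1(SO(N))$, and since a degree~$1$ twist in the tangent $2$-plane and one in the normal $2$-plane stabilize to the same generator of $\pi_1(SO(N))\cong\Z/2$, adding a twist on one side forces a parity flip on the other. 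The underlying computation is the same --- both proofs come down to ``generator plus generator is zero in $\Z/2$'' --- but your packaging buys reusability (the same conservation argument handles any further change of framing with no new matrices), while the paper's explicit factorization is self-contained and leaves nothing to set up. Two small points to tighten: with the paper's chosen reference trivializations (\cref{eq:first_triv} for the tangent side, \cref{df:std} for the normal side) the block sum of a compatible pair is the \emph{nontrivial} class, not the null class, so you should phrase the invariant as ``the sum is a fixed class'' rather than ``the sum is null-homotopic''; and the ``$2$-planes'' you twist in are varying sub-bundles rather than fixed planes, so the claim that both twists hit the generator should be justified by fixing the reference trivializations first and then noting that the resulting loops $\mathrm{diag}(\underline{\theta},I)$ and $\mathrm{diag}(I,\underline{\theta})$ are conjugate by a constant element of $SO(4)$, hence freely homotopic. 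Neither point affects the validity of the argument.
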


\begin{proof}
The trivialization of $\nu\left( S^1,\R^3\right)\oplus \left(S^1\times\R^2\right)$ in question is constructed by summing both sides of
\[
TS^1\oplus (S^1\times \R) \cong S^1\times \R^2,
\]
with $\nu\left(S^1,\R^3\right)$ to obtain 
\[
\nu\left( S^1,\R^3\right) \oplus (S^1\times \R^2) \cong \nu\left( S^1,\R^3\right)\oplus TS^1\oplus(S^1\times\R)\cong S^1\times \R^4.
\]
Relative to the stabilization of the standard trivialization of $\nu\left( S^1,\R^3\right)$, this trivialization may be defined by the homotopy class of a map $S^1\to SO(4)$. Let $\underline{\theta}$ and $I$ by the $2\times 2$ rotation by $\theta$ and identity matrices respectively. Then the map $S^1\to SO(4)$  giving this particular trivialization of $\nu\left( S^1,\R^3\right)\oplus (S^1\times \R^2)$ is defined by mapping $\theta\in S^1$ to $\left(\begin{array}{c|c} \underline{\theta} & 0\\ \hline 0 & \underline{\theta} \end{array} \right)$. Now observe that 
\[
\left(\begin{array}{c|c} \underline{\theta} & 0\\ \hline 0 & \underline{\theta} \end{array} \right) =\left(\begin{array}{c|c} \underline{\theta} & 0\\ \hline 0 & I \end{array} \right)\left(\begin{array}{c|c} 0 & I\\ \hline I & 0 \end{array} \right)\left(\begin{array}{c|c} \underline{\theta} & 0\\ \hline 0 & I \end{array} \right)\left(\begin{array}{c|c} 0 & I\\ \hline I & 0 \end{array} \right) = \left(\begin{array}{c|c} 0 & \underline{\theta}\\ \hline I & 0 \end{array}\right)^2,
\]
and that the map $S^1\to SO(4)$ given by sending $\theta$ to $\left(\begin{array}{c|c} 0&\underline{\theta} \\ \hline I&0 \end{array} \right)$ may be chosen as the generator of $\pi_1(SO(4))\cong \Z/2$. Since $SO(4)$ is a topological group, the multiplication in the group induces the operation on $\pi_1$ by the Eckmann-Hilton argument. So the map $\theta\mapsto \left(\begin{array}{c|c} \underline{\theta} & 0\\ \hline 0 & \underline{\theta} \end{array} \right)$ is the square of the generator and is thus, trivial. Therefore, the above trivialization of $\nu\left(S^1,\R^3\right) \oplus \left(S^1\times\R^2\right)$ is the standard trivialization as desired. 
\end{proof}

These compatibilities between trivializations will extend to equivariant settings throughout this paper. Now that we have explicit conversions of trivializations of tangent bundles to those of normal bundles, we will largely work with normal bundles. 

\section{The image of \texorpdfstring{$\omega_1^{C_2}$}{TEXT} in \texorpdfstring{$\pi_1^{C_2}(\Sph)$}{TEXT}}

In this section, we will compute where each element of $\omega_1^{C_2}$ is sent in $\pi_1^{C_2}(\Sph)$. 

As noted in the introduction, we will identify
\[
\pi_1^{C_2}\left(\Sph\right) \cong \pi_1\left(\Sph\right)\oplus \pi_1\left(\Sigma_+^\infty BC_2\right)\cong \pi_1\left(\Sph\right)\oplus H_0\left(BC_2;\Z/2\right)\oplus H_1\left(BC_2;\Z\right).
\]
The first isomorphism in the above equation is the tom Dieck splitting, and the second arises by observing that $\Sigma_+^\infty BC_2 \simeq \Sph\vee \Sigma^\infty BC_2$ and that $BC_2$ is connected. 

It is straightforward to show that every $C_2$-manifold admitting an $\R$-framing is a disjoint union of copies of $S^1$ with trivial action, $C_2\times S^1$, and $S(2\sigma)$.  

We begin by examining $\R$-framings of $S^1$. First embed $S^1$ as the unit sphere of the $xy$-plane in $\R^3$. We may then write a trivialization of the normal bundle 
\[
\nu\left(S^1,\R^3\right)\cong S^1\times\R^2
\]
by mapping the unit vector pointing in the outward radial direction in each fiber to the first standard basis vector, and the unit vector pointing in the positive $z$-direction to the second standard basis vector.

Relative to the above trivialization of $\nu\left(S^1,\R^3\right)$, any other $\R$-framing of $S^1$ is given by the homotopy class of a continuous map $S^1\to SO(2)$. We identify the group of all such maps with $\Z$ by taking the degree of a map $S^1\to SO(2)\simeq S^1$. We think of these framings as twisting the vectors of the framing discussed above an integer number of times as we traverse the fibers of the normal bundle of $S^1\hookrightarrow \R^3$. Note that this captures all possible stable framings because we stabilize only by trivial $G$-representations.

\begin{notn}
We denote $S^1$ equipped with the framing induced by the degree $n$ map $S^1\to SO(2)$ as $S^1_n$. With this labeling scheme, the explicit trivialization of $\nu\left(S^1,\R^3\right)$ discussed above is $S^1_0$. Recall from \cref{subsec:framing} that after translating trivializations of normal bundles to tangent bundles, $S^1_1$ is $S^1$ equipped with its Lie group framing. 
\end{notn} 

The following lemma will be helpful in each of the following computations.

\begin{lem}\label{lem:cobordism tom Dieck}
There is a ``tom Dieck splitting'' of the $\R$-framed $C_2$-cobordism group $\omega_1^{C_2}$ as $\omega_1\oplus \omega_1(BC_2)$. Moreover, the equivariant framed Pontryagin-Thom isomorphism is compatible with the tom Dieck splittings in equivariant framed cobordism and equivariant stable homotopy theory.
\end{lem}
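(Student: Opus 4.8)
The plan is to exhibit the splitting geometrically, using the special features of $\R$-framings in dimension one, and then to match each geometric summand with its homotopical counterpart by invoking naturality of the collapse construction.

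First I would observe that for an $\R$-framing the fixed set is forced to be codimension zero. At a fixed point $p \in M^{C_2}$ the framing supplies a $C_2$-equivariant isomorphism $T_pM \oplus \R^k \cong \R^{k+1}$ onto the trivial representation, so $T_pM$ must be the trivial one-dimensional representation; by the equivariant slice theorem the action is then trivial on a neighborhood of $p$, and $M^{C_2}$ is open and closed in $M$. Hence every $\R$-framed $C_2$-manifold is a disjoint union $M = M^{C_2} \amalg M_{\free}$ of a trivial-action part and a free part. This yields two homomorphisms: $i\colon \omega_1 \to \omega_1^{C_2}$, equipping a framed $1$-manifold with the trivial action, and $j\colon \omega_1(BC_2) \to \omega_1^{C_2}$, sending a framed manifold $N \to BC_2$ to the associated free double cover with pulled-back framing. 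Taking fixed points and taking the quotient of the free part give retractions to $i$ and $j$; since the identical fixed/free decomposition applies to any $(\R\oplus\R)$-framed cobordism $W$, it descends to cobordism classes. This establishes $\omega_1^{C_2} \cong \omega_1 \oplus \omega_1(BC_2)$.

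For compatibility I would match these summands with the homotopy-theoretic tom Dieck splitting, which is induced by the isotropy separation sequence $(EC_2)_+ \wedge \Sph \to \Sph \to \widetilde{E}C_2 \wedge \Sph$: the left term produces the Borel summand $\pi_1(\Sigma^\infty_+ BC_2)$, while projecting to the right term and passing to geometric fixed points $\Phi^{C_2}$ produces $\pi_1(\Sph)$. The fixed-summand compatibility is the more transparent of the two: applying $\Phi^{C_2}$ to the collapse map $S^{(2+2\sigma)\oplus\R} \to D(\nu)/S(\nu) \to S^{2+2\sigma}$ and using $(\R^{2+2\sigma}\oplus\R)^{C_2} = \R^3$ together with $(2\sigma)^{C_2} = 0$ recovers precisely the non-equivariant collapse map for the normal bundle of $M^{C_2}$ in $\R^3$. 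Thus $\Phi^{C_2} \circ PT = PT \circ (M \mapsto M^{C_2})$, which identifies $i$ with the geometric fixed-point summand.

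It remains to treat the free summand, and this is where I expect the real work to lie. A free $C_2$-manifold embeds equivariantly into the free locus of the ambient representation, so its collapse map factors, up to equivariant homotopy, through $(EC_2)_+ \wedge \Sph$; under the identification $\pi_1^{C_2}((EC_2)_+ \wedge \Sph) \cong \pi_1(\Sigma^\infty_+ BC_2)$ I would check that the resulting class is the ordinary Pontryagin-Thom image of the quotient $N \to BC_2$, matching $j$ with the Borel summand. The main obstacle is precisely this identification: one must track the equivariant collapse map through the equivalence $(EC_2)_+ \wedge_{C_2} (-)$ and the Thom isomorphism, and confirm that the free-part data of $M$ reassembles into the non-equivariant collapse map of $N$ inside the Borel construction. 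Granting this, together with the fixed-summand check and the isomorphism $\omega_1(BC_2) \cong \pi_1(\Sigma^\infty_+ BC_2)$ from the non-equivariant Pontryagin-Thom theorem, the equivariant Pontryagin-Thom isomorphism intertwines the two direct-sum decompositions, proving the compatibility claim.
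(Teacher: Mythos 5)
Your proof is correct and follows essentially the same route as the paper: the splitting comes from partitioning an $\R$-framed manifold (and any $(\R\oplus\R)$-framed cobordism) into its fixed and free parts, and your tangent-representation argument makes precise why the non-free components must in fact have trivial action. The paper disposes of the compatibility claim by citing the source of its Pontryagin--Thom theorem, whereas you sketch it directly via geometric fixed points and the transfer; the free-summand identification you flag as ``the real work'' and leave granted is exactly the content of that citation and of the paper's later \cref{lem:orbits transfer}.
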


\begin{proof}
The first statement is given by partitioning an $\R$-framed manifold into those connected components with a free $C_2$-action and those without. The second statement is precisely how the proof of the equivariant Pontryagin-Thom isomorphism for $\R$-framings is expressed in \cite{williams_cobordism}. 
\end{proof}

\begin{lem}\label{lem:split}
Identify,
\[
\omega_1\left(BC_2\right)\cong \pi_1\left(\Sigma_+^\infty BC_2\right)\cong H_0\left(BC_2;\Z/2\right)\oplus H_1\left(BC_2;\Z\right).
\]
The $H_0(BC_2;\Z/2)$ coordinate is controlled by the framing data of the 1-manifold. The $H_1(BC_2;\Z)$ coordinate is given by the homology class $\amalg_k S^1 \to BC_2\in \omega_1(BC_2)$. 
\end{lem}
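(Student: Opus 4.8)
The plan is to first pin down the abstract chain of isomorphisms and then match each summand to its stated geometric description. By \cref{lem:cobordism tom Dieck} the equivariant Pontryagin-Thom map carries the free summand $\omega_1(BC_2)$ isomorphically onto $\pi_1(\Sigma_+^\infty BC_2)$, which is exactly the content of the ordinary non-equivariant Pontryagin-Thom isomorphism with reference space $BC_2$ recalled in the introduction. I would then feed this through the stable splitting $\Sigma_+^\infty BC_2 \simeq \Sph \vee \Sigma^\infty BC_2$ used earlier in the section, which splits $\pi_1(\Sigma_+^\infty BC_2)$ as a genuine direct sum $\pi_1(\Sph) \oplus \ti\pi_1(\Sigma^\infty BC_2)$ with no extension problem, since the splitting arises from an honest section-retraction of spectra induced by the basepoint of $BC_2$. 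The first summand is $\pi_1(\Sph) = \Z/2$, which I identify with $H_0(BC_2;\pi_1(\Sph)) = H_0(BC_2;\Z/2)$ because $BC_2$ is connected. For the second summand I would run the Atiyah-Hirzebruch spectral sequence $E^2_{p,q} = \ti H_p(BC_2;\pi_q(\Sph)) \Rightarrow \pi_{p+q}(\Sigma^\infty BC_2)$: in total degree $1$ the only nonzero $E^2$-term is $E^2_{1,0} = H_1(BC_2;\Z) = \Z/2$ (the term $E^2_{0,1}$ vanishes since $\ti H_0(BC_2) = 0$), so the Hurewicz edge map $\ti\pi_1(\Sigma^\infty BC_2) \to H_1(BC_2;\Z)$ is an isomorphism. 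This establishes the displayed isomorphisms.

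Next I would identify the two coordinates geometrically. For the $H_1(BC_2;\Z)$ coordinate, the projection onto $\ti\pi_1(\Sigma^\infty BC_2)$ followed by the Hurewicz isomorphism is, under Pontryagin-Thom, the map sending a framed $1$-manifold with reference map $f\colon M \to BC_2$ to the pushforward $f_*[M]$ of its fundamental class. Applied to the (quotient) manifold $M = \amalg_k S^1$ this returns precisely the asserted homology class, and concretely $f_*[\amalg_k S^1]\in H_1(BC_2;\Z)=\Z/2$ records, modulo $2$, the number of circles whose reference map is homotopically nontrivial. For the $H_0(BC_2;\Z/2)$ coordinate, the complementary projection onto $\pi_1(\Sph)$ is induced by the collapse $BC_2 \to \ast$, i.e. it is the forgetful homomorphism $\omega_1(BC_2) \to \omega_1 = \pi_1(\Sph) = \Z/2$. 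Since $\omega_1$ is generated by a circle carrying a single framing twist, this projection records the total number of framing twists modulo $2$, which is the framing data named in the statement.

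The main obstacle is the compatibility invoked in the second paragraph: I must verify that the abstract edge homomorphism of the spectral sequence agrees, under the Pontryagin-Thom isomorphism, with the geometric pushforward-of-fundamental-class map, so that the $H_1$-summand is \emph{literally} the homology class of the reference map rather than merely abstractly isomorphic to $\Z/2$. This is the standard compatibility of Pontryagin-Thom with the Hurewicz map — framed manifolds realize integral homology classes in the sense of Steenrod — which I would either cite or check directly by tracing a single nontrivially-mapped circle through the collapse construction. The remaining, easier point is that the complementary summand is detected by framing alone and is independent of the reference homology class; this follows because the splitting and its projection are induced by the basepoint and collapse of $BC_2$, which form a compatible section-retraction pair.
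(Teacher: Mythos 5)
Your proposal is correct and follows essentially the same route as the paper, whose own proof is a one-line appeal to the wedge splitting $\Sigma_+^\infty BC_2\simeq \Sph\vee\Sigma^\infty BC_2$ and the construction of the Pontryagin--Thom map; your Atiyah--Hirzebruch and Hurewicz-compatibility paragraphs simply make explicit the ``standard formula for $\pi_1$ of a suspension spectrum'' that the paper takes as known. No gaps.
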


\begin{proof}
The proof is immediate from the construction of the equivariant framed Pontryagin-Thom map and the fact that the splitting $\pi_1(\Sigma_+^\infty BC_2)$ is induced by $\Sigma_+^\infty BC_2\simeq \Sph\vee \Sigma^\infty BC_2$. 
\end{proof}

\begin{prop}
Under the identification
\[
\pi_1^{C_2}(\Sph) \cong \pi_1(\Sph)\oplus H_0(BC_2;\Z/2)\oplus H_1(BC_2;\Z)\cong\Z/2^{\oplus 3},
\]
the framed $C_2$-manifold $S^1_n$ corresponds to $(n,0,0)$.
\end{prop}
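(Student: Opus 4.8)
The plan is to use the two preceding lemmas to reduce the statement to a single non-equivariant computation, after which only a bookkeeping of framing degrees mod $2$ remains. Because $S^1$ carries the trivial $C_2$-action, it has no connected component on which $C_2$ acts freely. Hence, under the splitting $\omega_1^{C_2}\cong\omega_1\oplus\omega_1(BC_2)$ of \cref{lem:cobordism tom Dieck}, which partitions an $\R$-framed manifold into its free and non-free components, the class $S^1_n$ lies entirely in the $\omega_1$ summand. By the compatibility of the equivariant Pontryagin-Thom isomorphism with the two tom Dieck splittings asserted in \cref{lem:cobordism tom Dieck}, the image of $S^1_n$ in $\pi_1^{C_2}(\Sph)\cong\pi_1(\Sph)\oplus\pi_1(\Sigma_+^\infty BC_2)$ therefore lies in the $\pi_1(\Sph)$ summand. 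Combined with the description of $\omega_1(BC_2)$ in \cref{lem:split}, this already forces the $H_0(BC_2;\Z/2)$ and $H_1(BC_2;\Z)$ coordinates to vanish, giving the last two entries of the triple.

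It then remains to evaluate the first coordinate, that is, the image of $S^1_n\in\omega_1$ under the non-equivariant Pontryagin-Thom map $\omega_1\to\pi_1(\Sph)$. Concretely I would embed $S^1$ in the trivial summand $\R^3\subset\R^{2+2\sigma}\oplus\R$ and note that, since $2\sigma$ contains no nonzero fixed vector, the fixed sub-bundle of the normal bundle is exactly $\nu(S^1,\R^3)\cong S^1\times\R^2$ carrying the degree $n$ framing. Passing to $C_2$-fixed points then turns the equivariant collapse map $S^{(2+2\sigma)\oplus\R}\to S^{2+2\sigma}$ into the non-equivariant collapse map $S^3\to S^2$ assembled from the framed manifold $S^1_n$; this is what makes the abstract compatibility of \cref{lem:cobordism tom Dieck} concrete, identifying the first coordinate with the non-equivariant class of $S^1_n$ in $\omega_1\cong\pi_1(\Sph)\cong\Z/2$.

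Finally I would compute that non-equivariant class. Stable framings of $S^1$ form a torsor over $\pi_1(SO)\cong\Z/2$, and the degree $n$ map $S^1\to SO(2)$ stabilizes to the element $n\bmod 2$ of $\pi_1(SO)$; thus $S^1_n$ is framed cobordant to $S^1_{n+2}$, while $S^1_0$ bounds the standardly framed disk and is null-cobordant. Since $S^1_1$ is $S^1$ with its Lie group framing, which maps to the Hopf map generating $\pi_1(\Sph)\cong\Z/2$ as recalled in the introduction, we get that the class of $S^1_n$ is $n\bmod 2$. Reading $n$ as an element of $\Z/2$ yields the first coordinate $n$ and hence the correspondence $S^1_n\leftrightarrow(n,0,0)$.

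The step I expect to be the main obstacle is the middle one: verifying cleanly that the $C_2$-fixed-point restriction of the equivariant collapse construction reproduces the non-equivariant Pontryagin-Thom map of $S^1_n$ with the correctly induced framing, so that the formal compatibility of \cref{lem:cobordism tom Dieck} can be converted into the explicit value $n\bmod 2$. The final framing-degree computation is classical, but the passage from $SO(2)$ to $SO$ must be tracked mod $2$ with care.
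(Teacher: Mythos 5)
Your proposal is correct and follows essentially the same route as the paper: both use the tom Dieck splitting of \cref{lem:cobordism tom Dieck} to place $S^1_n$ entirely in the $\pi_1(\Sph)$ summand (killing the $H_0$ and $H_1$ coordinates) and then reduce to the classical non-equivariant computation that the degree $n$ framing maps to $n \bmod 2$. Your added detail --- the fixed-point identification of the collapse map and the $\pi_1(SO)$ torsor argument --- merely makes explicit what the paper's proof leaves implicit.
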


\begin{proof}
Since $S^1$ with the trivial action is not free, its image is entirely in the $\omega_1$ summand of $\omega_1^{C_2}\cong \omega_1\oplus \omega_1(BC_2)$. Thus, by \cref{lem:cobordism tom Dieck}, the image of $S^1$ under the equivariant Pontryagin-Thom isomorphism is entirely in the $\pi_1(\Sph)$ summand of the target. Since $C_2$ is acting trivially on $S^1$, the image of this framed manifold in the $\pi_1(\Sph)$ coordinate is precisely the same as in the non-equivariant setting. In other words, the $\pi_1(\Sph)$ coordinate is the degree of the map $S^1\to SO(2)$ (modulo 2) which induces the framing on $S^1_n$. 
\end{proof}

We now investigate framings of $C_2\times S^1$. First embed $C_2\times S^1$ in $\R^{2+\sigma}$ in the $(\R\oplus\sigma)$-plane. Write a trivialization 
\[
\nu\left(C_2\times S^1,\R^{2+\sigma}\right)\cong (C_2\times S^1)\times \R^{1+\sigma}
\]
by mapping the outward radial unit vector in each fiber to the $\sigma$-coordinate and the unit vector in the positive $z=\R$ direction to the $\R$-coordinate. Relative to this framing, any other framing of $C_2\times S^1$ is given by a $C_2$-equivariant map $C_2\times S^1\to SO(1+\sigma)$ with $SO(1+\sigma)$ as in \cref{df:orthogonal}. However, these are in bijective correspondence with non-equivariant maps $S^1\to SO(2)$ by restricting to one copy of $S^1$. We identify the set of such maps with $\Z$ by taking their degree. 

\begin{notn}
We denote $C_2\times S^1$ equipped with the framing induced by the degree $n$ map $S^1\to SO(2)$ by $C_2\times S^1_n$. 
\end{notn}

\begin{prop}\label{prop:swap}
Under the identification
\[
\pi_1^{C_2}(\Sph) \cong \pi_1(\Sph)\oplus H_0(BC_2;\Z/2)\oplus H_1(BC_2;\Z)\cong\Z/2^{\oplus 3},
\]
the framed manifold $C_2\times S^1_n$ corresponds to $(0,n,0)$.
\end{prop}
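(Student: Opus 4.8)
The plan is to exploit the two splitting lemmas (\cref{lem:cobordism tom Dieck} and \cref{lem:split}) to reduce the equivariant computation to a non-equivariant statement about the quotient circle, handling one coordinate at a time. First I would observe that $C_2\times S^1_n$ carries a \emph{free} $C_2$-action, so by \cref{lem:cobordism tom Dieck} its entire image under the equivariant Pontryagin--Thom map lies in the $\omega_1(BC_2)$ summand of $\omega_1^{C_2}\cong\omega_1\oplus\omega_1(BC_2)$. In particular the $\pi_1(\Sph)$ coordinate vanishes, which gives the leading $0$.

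Next I would identify the class of $C_2\times S^1_n$ in $\omega_1(BC_2)$ with the quotient datum $(C_2\times S^1)/C_2\to BC_2$ together with its induced framing. The quotient is a single circle, and the defining double cover $C_2\times S^1\to S^1$ is the \emph{trivial} (disconnected) double cover, so its classifying map $S^1\to BC_2$ is null-homotopic. By the description in \cref{lem:split} of the $H_1(BC_2;\Z)$ coordinate as the homology class of this reference map, the third coordinate is therefore $0$.

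It remains to compute the $H_0(BC_2;\Z/2)$ coordinate, which \cref{lem:split} tells us is governed by the framing. Here I would use that, under the identification $\omega_1^{C_2,\free}\cong\omega_1(BC_2)$, the free summand of the equivariant Pontryagin--Thom map is simply the non-equivariant Pontryagin--Thom map applied to the quotient manifold with its induced framing and reference map (this is exactly how the proof of the equivariant isomorphism is organized, per \cref{lem:cobordism tom Dieck}). Since the reference map is null, the class factors through $\omega_1=\omega_1(*)\to\omega_1(BC_2)$ induced by $*\to BC_2$, which on homotopy groups is the unit inclusion $\Sph\hookrightarrow\Sigma_+^\infty BC_2$ landing in the wedge summand identified with $H_0(BC_2;\Z/2)$. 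The induced framing on the quotient circle again twists $n$ times, so by the computation for $S^1_n$ above its image is $n\bmod 2\in\pi_1(\Sph)\cong H_0(BC_2;\Z/2)$. Assembling the three coordinates yields $(0,n,0)$.

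The main obstacle I anticipate is making the descent precise: verifying that the degree-$n$ equivariant framing on $C_2\times S^1_n$ induces a degree-$n$ framing on the quotient circle, and confirming that the free part of the equivariant construction genuinely agrees with the non-equivariant construction on this quotient, so that the earlier result may legitimately be invoked. The remaining delicate point is the bookkeeping of which $\Z/2$ summand receives the framing contribution, namely that the framing data feeds the $H_0(BC_2;\Z/2)$ factor while the reference map feeds $H_1(BC_2;\Z)$; this is precisely the dichotomy that \cref{lem:split} is built to track.
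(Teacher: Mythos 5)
Your proposal is correct and follows essentially the same route as the paper: the free action kills the $\pi_1(\Sph)$ coordinate via the tom Dieck splitting, the triviality of the double cover (equivalently, the null-homotopy of $S^1\to BC_2$) kills the $H_1(BC_2;\Z)$ coordinate, and the reduction of equivariant framing data on $C_2\times S^1$ to a non-equivariant framing on a single circle gives $n$ mod $2$ in the $H_0(BC_2;\Z/2)$ coordinate. The only cosmetic difference is that you phrase the last step through the quotient circle while the paper restricts to one component, but these are the same adjunction.
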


\begin{proof}
By \cref{lem:cobordism tom Dieck}, $C_2\times S^1$ is sent to 0 in the first coordinate of $\Z/2^{\oplus 3}$ as $C_2$ acts freely. 

Now model $EC_2$ as $S^\infty$ and $BC_2$ as $\RP^\infty$, the orbit space of $S^\infty$ by the antipodal action. The map $C_2\times S^1\to S^\infty$ is $C_2$-homotopic to the composite of the collapse $C^2 \times S^1 \to C_2 \times *$ and $C_2\times *\to S^\infty$ since the two components of the image can be $C_2$-equivariantly deformed to the north and south poles. Then after passing to orbits, the map $S^1\to \RP^\infty$ is null-homotopic. Thus, $C_2\times S^1$ is sent to 0 in the $H_1(BC_2;\Z)$ coordinate of $\Z/2^{\oplus 3}$. 

Observe that for any $G$-space $X$, the set of equivariant maps $C_2\times S^1\to X$, is in bijective correspondence with the set of non-equivariant maps $S^1\to X$. Consequently, a $C_2$-equivariant $\R$-framing of $C_2 \times S^1$ is equivalent to a non-equivariant framing of $S^1$. Therefore, the $H_0(BC_2;\Z/2)$ coordinate of $\Z/2^{\oplus 3}$ is the same as in the non-equivariant case --- it is given by $n$ (modulo 2).   
\end{proof}

Before discussing $S(2\sigma)$, the circle with antipodal action, we will need the following lemma. 

\begin{lem}\label{lem:orbits transfer}
The following diagram commutes:
\[
\begin{tikzcd}
\omega_1^{C_2}\left(EC_2\right) \arrow[d,"(-)/C_2"] \arrow[r] & \pi_1^{C_2}\left(\Sigma_+^\infty EC_2\right)\\
\omega_1\left(BC_2\right) \arrow[r] & \pi_1\left(\Sigma_+^\infty BC_2\right) \arrow[u,"tr"]
\end{tikzcd}.
\]
The horizontal maps are Pontryagin-Thom constructions, the left vertical map is taking $C_2$-orbits, and the right vertical map is the equivariant transfer. 
\end{lem}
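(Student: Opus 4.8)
The plan is to give a geometric model for the transfer and then chase the square on representatives. The crucial preliminary observation is that the two vertical maps are mutually inverse: taking orbits $(-)/C_2$ identifies $\omega_1^{C_2}(EC_2)$ with $\omega_1(BC_2)$, with inverse given by pullback along the universal double cover $p\colon EC_2\to BC_2$. Concretely, a framed $1$-manifold $f\colon N\to BC_2$ pulls back to $\widetilde N := N\times_{BC_2}EC_2$, a free $C_2$-manifold with an equivariant map to $EC_2$; since $p$ is a local diffeomorphism, $T\widetilde N\cong p^{\ast}TN$ and the framing of $N$ pulls back to a $C_2$-equivariant $\R$-framing of $\widetilde N$ (equivariant precisely because it descends to $N$). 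Conversely, if $M\to EC_2$ is free then $M\cong (M/C_2)\times_{BC_2}EC_2$ by the universal property of $p$, compatibly with framings, so $p^{\ast}=((-)/C_2)^{-1}$. Setting $N=M/C_2$, the commutativity of the square is therefore equivalent to the statement $\mathrm{PT}^{C_2}\circ p^{\ast}=\mathrm{tr}\circ\mathrm{PT}$, i.e. that for every framed $N\to BC_2$ the equivariant Pontryagin-Thom class of $\widetilde N$ equals the transfer of the Pontryagin-Thom class of $N$.

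The main step is to recognize the equivariant transfer as exactly this pullback-to-the-cover operation at the level of Pontryagin-Thom data. I would build $\mathrm{tr}\colon\pi_1(\Sigma_+^\infty BC_2)\to\pi_1^{C_2}(\Sigma_+^\infty EC_2)$ directly from the double cover: representing a class by the collapse map associated to an embedding $N\hookrightarrow\R^{1+m}$ with trivialized normal bundle together with $f\colon N\to BC_2$, one forms the fiberwise collapse onto the two sheets of $p$ and identifies the resulting map of $C_2$-spectra with the equivariant collapse attached to an equivariant embedding of $\widetilde N$ into a suitable $C_2$-representation (as in \cite{williams_cobordism}, whose trivial directions carry the collapse and whose sign directions record the free $C_2$-action). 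Because $\widetilde N\to N$ is a covering, the normal bundle and framing of $\widetilde N$ are literally the pullback of those of $N$, so the two collapse maps agree once the embeddings are chosen compatibly. This is where the real work lies: matching the equivariant normal-bundle trivialization used by the equivariant Pontryagin-Thom map of \cite{williams_cobordism} with the pulled-back framing produced by the transfer, and checking that the auxiliary $C_2$-representation into which $\widetilde N$ is embedded does not alter the stable class. I expect the principal obstacle to be purely this geometric identification of the transfer, together with the attendant bookkeeping of orientations and framings under pullback.

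Granting the identification $\mathrm{tr}=\mathrm{PT}^{C_2}\circ p^{\ast}\circ\mathrm{PT}^{-1}$, the diagram chase is then formal: beginning with $[M\to EC_2]$, the lower route yields $\mathrm{tr}(\mathrm{PT}(M/C_2))=\mathrm{PT}^{C_2}(p^{\ast}(M/C_2))=\mathrm{PT}^{C_2}(M)$, using the canonical isomorphism $p^{\ast}(M/C_2)\cong M$ of the first paragraph, which is exactly the upper route. A cleaner alternative, which I would fall back on if the explicit collapse comparison becomes unwieldy, is to invoke the fact --- implicit in the construction of the equivariant Pontryagin-Thom isomorphism for free manifolds in \cite{williams_cobordism}, and already used in \cref{lem:cobordism tom Dieck} --- that the free summand of the equivariant theory is \emph{defined} by passage to the quotient via the Adams isomorphism, of which the transfer here is the geometric incarnation; the lemma then records that this defining identification is compatible with the honest geometric operation of taking orbits.
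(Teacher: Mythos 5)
Your proposal is correct and follows essentially the same route as the paper: the paper's proof (which defers the details to an external reference) rests on exactly the observation you make central, namely that the transfer is itself geometrically a Pontryagin--Thom collapse (for the double cover $EC_2\to BC_2$) and that composing collapse maps yields the equivariant collapse for the pulled-back manifold $\widetilde N\cong M$. Your preliminary reduction via the inverse $p^{\ast}$ to the orbit map is a harmless reorganization of the same verification.
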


\begin{proof}
The proof is a verification relying on the fact that the composition of Pontryagin-Thom collapse maps is a Pontryagin-Thom collapse map. A full proof may be found in \cite[Lemma 4.11]{periodic_points}. 
\end{proof}

We must now discuss $\R$-framings of $S(2\sigma)$, the 1-sphere equipped with the antipodal action. We will begin by enumerating trivializations of the normal bundle of $S(2\sigma)$. Embed $S(2\sigma)$ into $\R^{1+2\sigma}$ as the unit circle in the $2\sigma$-plane where we identify $2\sigma$ with the $xy$-plane. Begin with the canonical trivialization of $T\R^{1+2\sigma}|_{S(2\sigma)}\cong S(2\sigma)\times\R^{1+2\sigma}$. The transformation $T_\theta\R^{1+2\sigma}|_{S(2\sigma)}\to T_\theta\R^{1+2\sigma}|_{S(2\sigma)}$ defined in \cref{eq:deformation} is $C_2$-equivariant with respect to these choices (it may be helpful here to recall \cref{fig:tangent,fig:normal}). Then restricting to the image of the unit $x$ and $y$ vectors gives rise to a $C_2$-equivariant trivialization
\[
\nu\left(S(2\sigma),\R^{1+2\sigma}\right)\cong S(2\sigma)\times 2\sigma.
\]
After forgetting the $C_2$-action, the underlying framed manifold is $S^1_1$ discussed previously. 

Now any other trivialization of $\nu\left(S(2\sigma),\R^{1+2\sigma}\right)$ is given by the $C_2$-homotopy class of a $C_2$-equivariant map $S(2\sigma)\to SO(2\sigma)$ with $SO(2\sigma)$ as in \cref{df:orthogonal}. The action of $C_2$ on $SO(2\sigma)$ is given by conjugation and is thus, trivial. So an equivariant map $S(2\sigma)\to SO(2\sigma)$ must send antipodal points to the same element. Therefore, the set of homotopy classes of equivariant maps $S(2\sigma)\to SO(2\sigma)$ may be identified with $\Z$ by taking the degree which, importantly, must always be $2n$. We may think of a framing as ``twisting $n+\tfrac{1}{2}$ times on half of $S(2\sigma)$". Equivariance then dictates what must happen on the other half of the circle. 

\begin{notn}
We denote $S(2\sigma)$ equipped with the framing induced by the degree $2n$ map $S(2\sigma)\to SO(2\sigma)$ by $S(2\sigma)_n$.
\end{notn}

\begin{prop}\label{prop:antipodal_base}
Under the identification,
\[
\pi_1^{C_2}(\Sph)\cong \pi_1(\Sph)\oplus H_0(BC_2;\Z/2)\oplus H_1(BC_2;\Z),
\]
the framed manifold $S(2\sigma)_0$ defined above is mapped to $(0,1,1)$.
\end{prop}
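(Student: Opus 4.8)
The plan is to exploit that $S(2\sigma)$ is a \emph{free} $C_2$-manifold in order to reduce the whole computation to a non-equivariant one over $BC_2$, and then to read off the two nonzero coordinates directly from \cref{lem:split}. First I would note that the antipodal action on $S(2\sigma)$ is free, so under the cobordism tom Dieck splitting of \cref{lem:cobordism tom Dieck} the class of $S(2\sigma)_0$ lies entirely in the $\omega_1(BC_2)$ summand. Since the equivariant Pontryagin--Thom isomorphism is compatible with the tom Dieck splittings, this immediately forces the $\pi_1(\Sph)$ coordinate to be $0$, accounting for the first entry. By \cref{lem:orbits transfer}, together with the fact that the free summand of $\pi_1^{C_2}(\Sph)$ is identified with $\pi_1(\Sigma_+^\infty BC_2)$ via the transfer, the remaining data is computed by passing to orbits: the image of $S(2\sigma)_0$ in $\pi_1(\Sigma_+^\infty BC_2)\cong H_0(BC_2;\Z/2)\oplus H_1(BC_2;\Z)$ is the ordinary Pontryagin--Thom image of the framed $1$-manifold $N:=S(2\sigma)/C_2\to BC_2$, where $N$ is a circle traversing half of $S(2\sigma)$ and the reference map classifies the double cover $S(2\sigma)\to N$.

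For the $H_1(BC_2;\Z)$ coordinate, \cref{lem:split} tells me to compute the homology class of $N\to BC_2$. Since $S(2\sigma)$ is connected, $S(2\sigma)\to N$ is the connected double cover, so its classifying map sends a generator of $\pi_1(N)\cong\Z$ to the generator of $\pi_1(BC_2)\cong\Z/2$; hence on first homology it carries $[N]$ to the generator of $H_1(BC_2;\Z)\cong\Z/2$, giving the entry $1$.

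For the $H_0(BC_2;\Z/2)$ coordinate, \cref{lem:split} says the answer is the framing class of $N$ in $\omega_1\cong\Z/2$, i.e.\ the number of framing twists of $N$ modulo $2$. Here I would pass to the tangent picture: as recorded in the construction of $S(2\sigma)_0$, its underlying framing is the Lie group framing, obtained from the equivariant conversion $F$ of \cref{eq:deformation}. The defining vector field $\partial_\theta$ is $C_2$-equivariant with trivial action on the framing coordinate $\R$, so it descends to the nowhere-zero field $\partial_\phi$ on $N$. Thus the induced framing on $N$ is its own Lie group framing, which represents the generator of $\omega_1$, giving the entry $1$. Combining the three coordinates yields $(0,1,1)$. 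The conceptual point — and the source of the ``$+1$'' appearing in \cref{thm:theorem 1} — is that the degree-$0$ equivariant framing $S(2\sigma)_0$ descends not to the bounding framing of the orbit circle but to its Lie framing.

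The main obstacle is precisely this last step: making the descent of the framing across the double cover rigorous rather than heuristic. One must reconcile two competing ``half-turns'' — the orbit circle $N$ is traversed once while $S(2\sigma)$ is traversed twice, and the $C_2$-action on the normal fiber $2\sigma$ is by $-I$ — and a naive count of normal twists is further complicated by the fact that the quotient $\R^{1+2\sigma}/C_2$ is not a manifold, so there is no literal normal framing of $N$ to appeal to. Routing the argument through the abstract tangential framing, which descends cleanly, is what sidesteps these difficulties and pins down the Lie (rather than bounding) framing on $N$, and hence the nontrivial $H_0$ entry.
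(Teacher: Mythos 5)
Your proposal is correct and follows essentially the same route as the paper's proof: the tom Dieck splitting kills the $\pi_1(\Sph)$ coordinate, the orbit map $S(2\sigma)/C_2\to BC_2$ (the $1$-skeleton inclusion, equivalently the classifying map of the connected double cover) gives the $H_1$ entry, and the $H_0$ entry is obtained by converting to the tangential Lie group framing, descending it to the orbit circle via \cref{lem:orbits transfer}, and noting that the Lie-framed circle generates $\omega_1\cong\Z/2$. Your closing remark about why one must route through the tangential rather than the normal framing when passing to orbits is exactly the point the paper exploits, so no gap remains.
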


\begin{proof}
Since $C_2$ acts freely on $S(2\sigma)$, this manifold is sent to $0$ in the $\pi_1(\Sph)$ component. 

Again model $EC_2$ as $S^\infty$ with the $C_2$-CW-complex structure with a $C_2$-2-cell in every dimension, and $BC_2$ as $\RP^\infty$. The element of $H_1(BC_2;\Z)$ represented by $S(2\sigma)$ is given by the homology class of the $C_2$-orbits of $S(2\sigma)\to EC_2$ as described in \cref{lem:split}. The map $S(2\sigma)\to S^\infty$  is the inclusion of the $1$-skeleton. After taking $C_2$-orbits, we obtain a map $S^1\to \RP^\infty$ which is again the inclusion of the $1$-skeleton. Thus, $S(2\sigma)$ is sent to $1$ in the $H_1(BC_2;\Z)$ summand. 

We now discuss the $H_0(BC_2;\Z/2)$ summand. By the discussion of \cref{subsec:framing}, the Lie group trivialization of $TS(2\sigma)$ gives rise to the framing on $S(2\sigma)_0$. This is because when these trivializations of $T(2\sigma)$ and $\nu\left(S(2\sigma),\R^{1+2\sigma}\right)$ are summed together, they give the canonical trivialization of $T\R^{1+2\sigma}|_{S(2\sigma)}$. So instead of working with $S(2\sigma)_0$ it suffices to work with $S(2\sigma)$ equipped with the Lie group trivialization of its tangent bundle. 

We now apply \cref{lem:orbits transfer}. Taking the $C_2$-orbits of $S(2\sigma)$ equipped with the Lie group trivialization of its tangent bundle gives $S^1$ with the Lie group trivialization of its tangent bundle as an element of $\omega_1(BC_2)$. Under the Pontryagin-Thom isomorphism, this is sent to a map $S^{n+1}\to S^n\wedge_+ BC_2$ which, when composed with $BC_2\to *$, is a suspension of the Hopf fibration. The projection map $\pi_1\left(\Sigma_+^\infty BC_2\right)\cong H_0(BC_2;\Z/2)\oplus H_1(BC_2;\Z) \to H_0(BC_2;\Z/2)$ is induced by $BC_2\to *$. So the image of $S(2\sigma)$ in $H_0(BC_2;\Z/2)\subset \pi_1(\Sigma_+^\infty BC_2)$ is 1. Since the equivariant transfer is an isomorphism which respects this splitting, we can see that the image of $S(2\sigma)$ equipped with this framing is $1$ in the $H_0(BC_2;\Z/2)$ summand, as desired. 
\end{proof}

We now compute the image of $S(2\sigma)$ equipped with any other framing.

\begin{prop}
Under the identification 
\[
\pi_1^{C_2}(\Sph)\cong \pi_1(\Sph)\oplus H_0(BC_2;\Z/2)\oplus H_1(BC_2;\Z)
\]
the framed $C_2$-manifold $S(2\sigma)_n$ is sent to $(0,n+1,1)$ by the equivariant Pontryagin-Thom isomorphism.
\end{prop}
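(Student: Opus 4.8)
The plan is to compute each of the three coordinates separately, reusing the machinery from \cref{prop:antipodal_base}. The first coordinate is immediate: since $C_2$ acts freely on $S(2\sigma)$, \cref{lem:cobordism tom Dieck} places its image entirely in the $\omega_1(BC_2)$ summand, so the $\pi_1(\Sph)$ coordinate vanishes. The third coordinate is equally quick: by \cref{lem:split} the $H_1(BC_2;\Z)$ class depends only on the underlying $C_2$-manifold together with its map to $EC_2$, not on the choice of framing. Since $S(2\sigma)_n$ and $S(2\sigma)_0$ have the same underlying manifold, the computation from \cref{prop:antipodal_base} applies verbatim and the $H_1(BC_2;\Z)$ coordinate is $1$.

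The content is in the $H_0(BC_2;\Z/2)$ coordinate, which I would again extract via the orbits-transfer square of \cref{lem:orbits transfer}. The point is to identify the framed $1$-manifold obtained by taking $C_2$-orbits of $S(2\sigma)_n$. Recall that the framing $S(2\sigma)_n$ is the degree $2n$ equivariant map $S(2\sigma)\to SO(2\sigma)$ applied to $S(2\sigma)_0$. Because $C_2$ acts trivially on $SO(2\sigma)$, such an equivariant map factors as $\bar f\circ q$, where $q\colon S(2\sigma)\to S(2\sigma)/C_2\cong S^1$ is the double cover and hence has degree $2$. Thus $\deg\bar f=n$, i.e.\ the framing descends to a degree $n$ twist on the orbit circle relative to the orbit of $S(2\sigma)_0$. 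In \cref{prop:antipodal_base} the orbit of $S(2\sigma)_0$ was identified with $S^1$ carrying its Lie group framing, namely $S^1_1$. Adding the descended degree $n$ twist therefore produces $S^1_{n+1}$ as the element of $\omega_1(BC_2)$ represented by the orbits of $S(2\sigma)_n$.

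To finish, I would run $S^1_{n+1}$ through the non-equivariant Pontryagin-Thom map: its $H_0(BC_2;\Z/2)$ component is the non-equivariant framed cobordism class, which is the framing degree modulo $2$, here $(n+1)\bmod 2$. Since the equivariant transfer in \cref{lem:orbits transfer} is an isomorphism compatible with the $H_0$--$H_1$ splitting, this value is exactly the $H_0(BC_2;\Z/2)$ coordinate of $S(2\sigma)_n$. Combining the three coordinates gives $(0,n+1,1)$.

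I expect the main obstacle to be justifying the degree bookkeeping in the second paragraph --- specifically, that the degree $2n$ equivariant framing descends to a degree $n$ framing on the orbit (the factor of two coming from the double cover), and that these framing twists add correctly to the base Lie group framing so that the orbit is $S^1_{n+1}$ rather than, say, $S^1_n$. Everything else is a direct appeal to \cref{prop:antipodal_base,lem:orbits transfer,lem:cobordism tom Dieck,lem:split}.
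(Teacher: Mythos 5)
Your proposal is correct, but the key step --- the $H_0(BC_2;\Z/2)$ coordinate --- is handled by a genuinely different argument than the paper's. The paper constructs an explicit equivariant framed cobordism realizing $S(2\sigma)_n \sim S(2\sigma)_0\amalg (C_2\times S^1_n)$ (a pair-of-pants built from $S(2\sigma)\times I$ with two disks removed, with the framing extended by adding degrees on boundary components), and then reads off the answer from \cref{prop:swap} and \cref{prop:antipodal_base}. You instead push everything through the orbits--transfer square of \cref{lem:orbits transfer}: since the twisting map $S(2\sigma)\to SO(2\sigma)$ lands in a space with trivial $C_2$-action, it factors through the degree-$2$ quotient $q\colon S(2\sigma)\to S^1$, so the degree-$2n$ equivariant twist descends to a degree-$n$ twist on the orbit circle, which sits on top of the Lie group framing $S^1_1$ coming from $S(2\sigma)_0$. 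This is a clean reduction and arguably a more uniform extension of \cref{prop:antipodal_base}, at the cost of the degree bookkeeping you flag: one should note that the twist is applied to the \emph{normal} framing while the orbit map of \cref{lem:orbits transfer} is most naturally stated for descended stable \emph{tangent} framings, and the tangent/normal conversion of \cref{subsec:framing} negates the integer degree; since the target $\omega_1\cong\Z/2$ only sees parity, the answer $(n+1)\bmod 2$ is unaffected either way. The paper's cobordism approach avoids this descent argument entirely but requires verifying that the framing extends over the cobordism; your approach avoids the cobordism but requires the descent. Both give $(0,n+1,1)$.
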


\begin{proof}
First note that the $\pi_1(\Sph)$ and $H_1(BC_2;\Z)$ coordinates are not effected by changing the framing on $S(2\sigma)$, so the arguments of \cref{prop:antipodal_base} still hold. To show that the $H_0(BC_2;\Z/2)$ coordinate is $n+1$ we will show that there is an equivariant framed cobordism giving an equivalence
\[
S(2\sigma)_n \sim S(2\sigma)_0\amalg (C_2\times S^1_n).
\]
We first observe that there is a $C_2$-cobordism between the underlying manifolds. This can be constructed by taking $S(2\sigma)\times I$ and removing two open disks from opposite sides of the cylinder to create the $C_2\times S^1$ boundary component. Call this $C_2$-cobordism $M$. 

The product of the Lie group trivialization of $TS(2\sigma)$ and the canonical trivialization of $TI$ give rise to a $C_2$-equivariant trivialization $TM\cong M\times\R^2$. Embed $M$ as the unit sphere in $\R^{1+2\sigma}$ sitting inside of $\R^{2+2\sigma}$. We may convert this $\R^2$-framing of $M$ into a trivialization 
\[
\nu\left(M,\R^{2+2\sigma}\right)\oplus \left(M\times \R^2\right) \cong M\times\R^{2+2\sigma}.
\]

Relative to this trivialization, any other trivialization of $\nu\left(M,\R^{2+2\sigma}\right)$ is given by the equivariant homotopy class of an equivariant map $M\to SO(2+2\sigma)$. We have an equivariant map $\partial M\to SO(2\sigma)$ which extends to an equivariant map $M\to SO(2\sigma)$ by adding the degrees of the maps on boundary components together in a way that respects equivariance. To see this, note that $M$ is  $C_2$-homotopy equivalent to the wedge of three copies of $S^1$ with $C_2$-action restricting to the antipodal action on one copy of $S^1$ and to the action that interchanges the other two copies of $S^1$. Stabilizing this map by two copies of the trivial representation gives rise to a framing of $M$, the boundary of which is the disjoint union of $S(2\sigma)_n$ and $S(2\sigma)_0\amalg C_2\times S^1_n$. Applying \cref{prop:swap,prop:antipodal_base} gives the desired result.  
\end{proof}

\begin{rmk}
The results of this sections should hold more generally for $C_p$ any cyclic group of prime order. In this case we obtain a decomposition:
\[
\pi_1^{C_p}(\Sph)\cong \pi_1(\Sph)\oplus H_0(BC_p;\Z/2)\oplus H_1(BC_p;\Z) \cong \Z/2\oplus \Z/2\oplus \Z/p.
\]
The arguments concerning the $\pi_1(\Sph)$ and $H_0(BC_p;\Z/2)$ summands are nearly identical to those presented here. The $H_1(BC_p;\Z)$ summand presents slightly greater difficulty as the standard cell structure on $S^\infty$ is not compatible with free $C_p$ actions with $p\neq 2$. However, one may work around this by modeling $S^\infty$ as a colimit of odd dimensional spheres with free $C_p$-action. Then to compute the homology class of the image of $C_p\times S^1$ and $S^1$ with rotation by $2\pi/p$ action one may restrict their attention to the image in $S^3$. 
\end{rmk}

\section{The image of \texorpdfstring{$\omega_\sigma^{C_2}$}{TEXT} in \texorpdfstring{$\pi_\sigma^{C_2}(\Sph)$}{TEXT}}
In this section, we give explicit descriptions of the image of each $\sigma$-framed $C_2$-manifold under the equivariant Pontryagin-Thom isomorphism $\omega_\sigma^{C_2}\to \pi_\sigma^{C_2}(\Sph)$. Recall that $\sigma$ is the sign representation of $C_2$. 

We first observe that any $\sigma$-frameable $C_2$-manifold is a disjoint union of copies of $S(1+\sigma)$ and $C_2\times S^1$. We begin with the following proposition.

\begin{prop}
The $C_2$-manifold $C_2\times S^1$ equipped with any $\sigma$-framing is zero in $\omega_\sigma^{C_2}$. 
\end{prop}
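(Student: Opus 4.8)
The plan is to produce an explicit null-cobordism, valid for every $\sigma$-framing simultaneously. The key geometric observation is that $C_2$, as a free $C_2$-set, is exactly $S(\sigma)=\partial D(\sigma)$, so that
\[
C_2\times S^1\cong S(\sigma)\times S^1 = \partial\left(D(\sigma)\times S^1\right).
\]
Writing $W=D(\sigma)\times S^1$, I would first record that $TW\cong TD(\sigma)\oplus TS^1\cong \left(W\times\sigma\right)\oplus\left(W\times\R\right)$, since the tangent direction of $D(\sigma)$ carries the sign action and the Lie group framing trivializes $TS^1$. In particular $TW$ is already $\sigma\oplus\R$, which is precisely the framing datum required of a cobordism in $\omega_\sigma^{C_2}$. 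This is exactly the point at which $\sigma$-framings diverge from $\R$-framings: for an $\R$-framing one would instead need $TW\cong\R^2$ equivariantly, which fails over the fixed circle because $\sigma$ is not stably trivial, consistent with $C_2\times S^1$ being nonzero in $\omega_1^{C_2}$.

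Next I would treat the untwisted framing. The product framing---the canonical trivialization of $TD(\sigma)$ together with the Lie group framing of $S^1$---is a $\left(\sigma\oplus\R\right)$-framing of $W$, and by construction it restricts on $\partial W$ to the Lie group $\sigma$-framing of $C_2\times S^1$; this recovers the null-cobordism already noted in the example above. A general $\sigma$-framing of $C_2\times S^1$ differs from this one by the $C_2$-homotopy class of an equivariant map $C_2\times S^1\to SO(\sigma\oplus\R^k)$, which by the free-orbit adjunction in the proof of \cref{prop:swap} is the same as a non-equivariant map $S^1\to SO(\sigma\oplus\R^k)$, i.e.\ a class in $\pi_1(SO)\cong\Z/2$. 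It then remains to show that every such twist extends over $W$.

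The crux---and the step I expect to be the main obstacle---is this extension, because the $C_2$-action on $SO(\sigma\oplus\R^k)$ (conjugation by $\mathrm{diag}(-1,I_k)$) is nontrivial and $W$ has a fixed circle $\{0\}\times S^1$, so one cannot simply extend by the free-orbit adjunction. I would handle it by noting that $W$ equivariantly deformation retracts onto its core $\{0\}\times S^1$ with trivial action, whence equivariant twists of the framing on $W$ are classified by $\pi_1\left(SO(\sigma\oplus\R^k)^{C_2}\right)$. Computing the fixed subgroup gives $SO(\sigma\oplus\R^k)^{C_2}\cong S(O(1)\times O(k))$, whose identity component is $SO(k)$, and the restriction-to-boundary map is then identified with the map on $\pi_1$ induced by the inclusion $SO(k)\hookrightarrow SO(k+1)$. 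For $k$ large this is the isomorphism $\Z/2\to\Z/2$, so every boundary twist---in particular the nontrivial one---extends over $W$. Twisting the product framing by such an extension yields a $\left(\sigma\oplus\R\right)$-framing of $W$ restricting to the prescribed framing on $C_2\times S^1$, exhibiting it as null-cobordant and hence zero in $\omega_\sigma^{C_2}$.
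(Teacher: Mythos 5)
Your proof is correct, but it takes a genuinely different route from the paper. The paper lifts $C_2\times S^1$ (with any $\sigma$-framing) to the group $\omega_\sigma^{C_2}(EC_2)$ and shows that this entire group vanishes, via the long exact sequence induced by the cofiber sequence ${C_2}_+\to S^0\to S^\sigma$ together with the injectivity of $\omega_0^{C_2}(EC_2)\to\omega_0(EC_2)$ and the surjectivity of $\omega_1^{C_2}(EC_2)\to\omega_1(EC_2)$; no cobordism is ever constructed. You instead exhibit the explicit null-cobordism $W=D(\sigma)\times S^1$ --- extending the example the paper gives only for the Lie group framing --- and then handle arbitrary framings by an obstruction-theoretic extension argument: the equivariant deformation retraction of $W$ onto its fixed core circle identifies twists of the framing on $W$ with $\pi_1\left(SO(\sigma\oplus\R^k)^{C_2}\right)\cong\pi_1(SO(k))$, and the restriction to the boundary becomes the stabilization map $\pi_1(SO(k))\to\pi_1(SO(k+1))$, which is onto in the stable range. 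This crux step is exactly where the care is needed (the fixed-point description on the interior must be matched against the free-orbit description on the boundary, with one extra stabilization by the outward normal), and you carry it out correctly. The trade-off: the paper's argument is shorter and kills the whole group at once, but leans on unproven assertions about forgetful maps; yours is longer but fully explicit, stays in the geometric spirit of the rest of the paper, and actually produces the bounding manifold, which could be reused elsewhere.
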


\begin{proof}
First observe that $C_2\times S^1\in \omega_\sigma^{C_2}(EC_2)$. The homotopy cofiber sequence
\[
{C_2}_+\to S^0\to S^\sigma
\]
induces a long exact sequence
\[
\dots\to\omega_1^{C_2}(EC_2)\to \omega_1(EC_2)\to \omega_\sigma^{C_2}(EC_2) \to \omega_0^{C_2} (EC_2)\to \omega_0(EC_2). 
\]

The forgetful maps 
\[
\omega_0^{C_2}(EC_2)\to\omega_0(EC_2)\cong \omega_0 \qquad \text{and}\qquad \omega_1^{C_2}(EC_2)\to \omega_1(EC_2)\cong\omega_1
\]
are injective and surjective respectively. Thus, by exactness $\omega_\sigma^{C_2}(EC_2)$ is also zero. Then $C_2\times S^1$ is null-cobordant in $\omega_\sigma^{C_2}(EC_2)$ and thus also in $\omega_\sigma^{C_2}$. 
\end{proof}

We now examine $S(1+\sigma)$. We begin by classifying $\sigma$-framings of this manifold.

Embed $S(1+\sigma)$ in $\R^{1+2\sigma}$ as the unit circle in the $xy$-plane where we identify $x$ with the trivial $C_2$-representation. We now obtain a $C_2$-equivariant vector bundle isomorphism
\[
\nu\left(S(1+\sigma),\R^{1+2\sigma}\right)\cong S(1+\sigma)\times \R^{1+\sigma}
\]
by sending the unit vector pointing in the outward radial direction in each fiber to the first standard basis vector, and the unit vector pointing in the positive $z$-direction in each fiber to the second standard basis vector. This is our first $\sigma$-framing of $S(1+\sigma)$. Relative to this $\sigma$-framing, any other $\sigma$-framing is given by the $C_2$-homotopy class of a  $C_2$-equivariant map $S(1+\sigma)\to SO(1+\sigma)$ with $SO(1+\sigma)$ as in \cref{df:orthogonal}.

As a $2$-dimensional real vector space, $C_2$ acts on $\R^{1+\sigma}$ by $\begin{bmatrix}
1&0\\0&-1
\end{bmatrix}$. Thus, $C_2$ acts on $SO(1+\sigma)$ by 
\[
\begin{bmatrix}
\cos(\theta)&-\sin(\theta)\\\sin(\theta)&\cos(\theta)
\end{bmatrix}\mapsto \begin{bmatrix}\cos(\theta) &\sin(\theta)\\
-\sin(\theta) & \cos(\theta)
\end{bmatrix}.
\]

Note also that for our trivialization to be equivariant at the fixed points of $S(1+\sigma)$, the normal vector sent to $(1,0)$ in $\R^{1+\sigma}$ must come from a vector on which $C_2$ acts trivially. Thus, we may think of an equivariant map $S(1+\sigma)\to SO(1+\sigma)$ as follows. First choosing a path between the image of the two fixed points which winds around $SO(1+\sigma)$ a half integer number of times. Second take the negation of this path to get a full loop in $SO(1+\sigma)$. Note that by taking degrees of maps, the set of framings on $S(1+\sigma)$ is in bijection with $\Z$. 

\begin{notn}
We denote $S(1+\sigma)$ with the framing induced by the degree $n$ $C_2$-equivariant map $S(1+\sigma)\to SO(1+\sigma)$ as $S(1+\sigma)_n$.
\end{notn}

The equivariant Hopf fibration is a map 
\[
\eta\colon S^{1+2\sigma} \simeq \C^2\backslash \{0\} \rightarrow \mathbb{CP}^1\simeq S^{1+\sigma} 
\] 
given by restricting the projection map $\C^2\to \mathbb{CP}^1$ to the unit sphere. The $C_2$-action is given by complex conjugation. The equivariant Hopf fibration can be given by the formula $(z_0,z_1)\mapsto (2z_0\bar{z_1},|z_0|^2-|z_1|^2)$ which is $C_2$-equivariant. A straightforward calculation, proceeding from the homotopy cofiber sequence
\[
{C_2}_+\to S^0\to S^\sigma,
\] 
shows that $\pi_\sigma^{C_2}(\Sph)\cong \Z$. Then a theorem of Morel \cite{morel} which appears as \cite[Theorem 1.2]{dugger_isaksen} shows that $\eta$ generates $\pi_\sigma^{C_2}(\Sph)$. 

\begin{prop}
The $C_2$-manifold $S(1+\sigma)$ with its Lie group framing, is sent to the equivariant Hopf fibration $\eta\colon S^{1+2\sigma}\to S^{1+\sigma}$ by the equivariant Pontryagin-Thom isomorphism. 
\end{prop}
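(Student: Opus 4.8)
The plan is to exploit the fact, established in \cite{williams_cobordism}, that the equivariant Pontryagin--Thom map is an isomorphism whose inverse is realized geometrically by taking the framed regular preimage of a $C_2$-fixed regular value, with the framing pulled back along the derivative from a chosen framing of the target. Consequently, rather than computing the collapse map of $(S(1+\sigma),\text{Lie})$ forwards, it suffices to run the construction backwards: I would show that the framed regular preimage of $\eta$ is exactly $S(1+\sigma)$ equipped with its Lie group framing. Since the two constructions are mutually inverse, this identifies the Pontryagin--Thom image of $(S(1+\sigma),\text{Lie})$ with $\eta$. This is the natural route because, as the paper emphasizes, the underlying non-equivariant Hopf map has order $2$ whereas $\eta$ has infinite order in $\pi_\sigma^{C_2}(\Sph)\cong\Z$, so the forgetful invariant cannot detect the answer and one must argue equivariantly throughout.

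First I would fix $\eta$ concretely as the $C_2$-map $S(2+2\sigma)\to S(2+\sigma)$, $(z_0,z_1)\mapsto (2z_0 z_1^{*},\,|z_0|^2-|z_1|^2)$, where $\C^2\cong 2+2\sigma$ and $\C\oplus\R\cong 2+\sigma$ carry the complex-conjugation action and we use $S(2+2\sigma)\cong S^{1+2\sigma}$ and $S(2+\sigma)\cong S^{1+\sigma}$. I would then choose the $C_2$-fixed value $p=(0,1)\in\C\oplus\R$, verify that it is a regular value, and compute
\[
\eta^{-1}(p)=\{(z_0,0):|z_0|=1\},
\]
which is precisely the unit circle in the first $\C$-factor, i.e.\ $S(1+\sigma)$ with its conjugation action. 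Since $T_pS(2+\sigma)\cong 1+\sigma$, the derivative $d\eta$ identifies the normal bundle of this fiber with the trivial $(1+\sigma)$-bundle, and combining this with the trivial radial $\R$-summand recovers a $\sigma$-framing of $T\bigl(S(1+\sigma)\bigr)\cong S(1+\sigma)\times\sigma$, consistent with the classification of $\sigma$-framings given above.

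The heart of the argument, and the step I expect to be the main obstacle, is to check that the framing so induced is exactly the Lie framing and not one of its twists $S(1+\sigma)_n$. Here I would compute $d\eta$ restricted to the normal directions along the fiber, pull back a fixed $C_2$-equivariant framing of $T_pS(2+\sigma)$, and then use the tangent-to-normal conversions developed in \cref{subsec:framing} to compare the resulting trivialization with the one coming from the Lie group trivialization of $TS(1+\sigma)$, tracking the conjugation action carefully through the classifying equivariant map into $SO(1+\sigma)$. Finally I would reconcile the ambient representations: the Pontryagin--Thom construction recalled above outputs a map $S^{(2+2\sigma)+\sigma}\to S^{2+2\sigma}$, which agrees with the suspension $\Sigma^{1+\sigma}\eta$ of the Hopf map, so the two classes coincide in $\pi_\sigma^{C_2}(\Sph)$; any residual orientation convention would be pinned down in this last comparison. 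Because all of the statement's content is concentrated in the equivariant framing datum, the delicate point is precisely this verification that $d\eta$ yields the Lie framing rather than a nontrivially twisted one.
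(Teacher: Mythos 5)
Your proposal is correct in outline but runs the Pontryagin--Thom correspondence in the opposite direction from the paper. The paper argues forwards: it first uses the tangent-to-normal conversion of \cref{subsec:framing}, carried out equivariantly in the ambient representation $\R^{1+2\sigma}$ (this is why the paper stabilizes by $\sigma$ rather than $\R$), to identify the Lie group framing of $S(1+\sigma)$ with the normal framing defining $S(1+\sigma)_1$; it then observes that the resulting collapse map is the classical one sending $S^1_1$ to the Hopf map, and that every step of that construction is equivariant once the Hopf map is written as $\C^2\setminus\{0\}\to\mathbb{CP}^1$ with the conjugation action, so the output is $\eta$. You instead invert the correspondence: you take the $C_2$-fixed regular value $(0,1)$ of $\eta$, compute $\eta^{-1}(0,1)=S(1+\sigma)$, and must verify that the framing pulled back along $d\eta$ is the Lie framing. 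Your computation of the preimage, of $T_pS(2+\sigma)\cong 1+\sigma$, and of the representation bookkeeping identifying the output with $\Sigma^{1+\sigma}\eta$ are all correct. What each route buys: yours is a self-contained differential-geometric calculation that never needs to recognize a collapse map as $\eta$ on the nose; the paper's leverages the already-established non-equivariant identification and the \cref{subsec:framing} conversion at the cost of an ``extends equivariantly'' assertion. Two cautions. First, you correctly isolate the verification that $d\eta$ induces the Lie framing (rather than a twist $S(1+\sigma)_n$) as the entire content of the statement, but you only describe how you would do it; to be complete you must actually carry out that linear-algebra computation along the fiber, tracking the conjugation action through the classifying map into $SO(1+\sigma)$. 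Second, your argument takes as input that the inverse of the equivariant Pontryagin--Thom map is realized by the framed preimage of an equivariant regular value; for a fixed regular value of the already-smooth equivariant map $\eta$ this is unproblematic, but it is an additional fact beyond the forward construction the paper recalls, and you should cite or justify it.
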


\begin{proof}
The Lie group framing of $S(1+\sigma)$ is compatible with the trivialization
\[
\nu\left(S(1+\sigma),\R^{1+2\sigma}\right)\cong S(1+\sigma)\times \R^{1+\sigma}
\]
coming from $S(1+\sigma)_1$. The argument is the same as in the non-equivariant case which is discussed in \cref{subsec:framing}.  This works out precisely because we chose our ambient space to be $\R^{1+2\sigma}$ rather than $\R^{2+\sigma}$. In other words, we stabilize by $\sigma$ rather than $\R$.  

The underlying framed manifold of $S(1+\sigma)_1$ is sent to the Hopf fibration by the non-equivariant Pontryagin-Thom map. This extends to the $C_2$-equivariant case when the Hopf fibration is written as a map $\C^2\backslash \{0\}\to \mathbb{CP}^1$ with $C_2$-action given by complex conjugation. 
\end{proof}

\begin{thm}
The image, under the Pontryagin-Thom isomorphism, of the $\sigma$-framed $C_2$-manifold $S(1+\sigma)_n$ in $\Z\cong \pi_\sigma^{C_2}(\Sph)$ is either $0$ if $n$ is even or $1$ if $n$ is odd.
\end{thm}

\begin{proof}
We previously showed that $S(1+\sigma)_1$ is sent to $1\in \Z\cong \pi_\sigma^{C_2}(\Sph)$. Here we will use the map $\omega_\sigma^{C_2}\to \omega_0$ given by taking the $C_2$-fixed points of a $\sigma$-framed $C_2$-manifold. 

This map takes $S(1+\sigma)_1$ to $2\in\Z\cong \omega_0$. So when written as a map $\Z\to \Z$ this is the multiplication by 2 map, which is an injection. In general, if $n$ is odd, then $S(1+\sigma)_n$ is sent to $2$ in $\Z\cong \omega_0$. Thus, if $n$ is odd, $S(1+\sigma)_n$ is cobordant to $S(1+\sigma)_1$. If, on the other hand, $n$ is even, then $S(1+\sigma)_n$ is mapped to $0\in\Z\cong \omega_0$. Thus, $S(1+\sigma)_n$ represents the trivial cobordism class in $\omega_\sigma^{C_2}$. 
\end{proof}

We have now shown where any element of $\omega_\sigma^{C_2}$ is sent in $\pi_\sigma^{C_2}(\Sph)$ by the Pontryagin-Thom isomorphism. Note that while a connected $\sigma$-framed $C_2$-manifold can only be mapped to $0$ or $1$, the disjoint union of $n$ copies of $S(1+\sigma)_1$ is sent to $n\in \Z$.

We end with some remarks on why the equivariant Hopf fibration is of infinite order rather than of order 2 as it is in the non-equivariant case. There is a $C_2$-equivariant cobordism from $S(1+\sigma)\amalg S(1+\sigma)$ to $S(1+\sigma)$ given by the involution on the pair of pants that reflects the front to the back. However, this only extends to a framed cobordism when the boundary components are $S(1+\sigma)_n\amalg S(1+\sigma)_n$ and $S(1+\sigma)_{2n}$ for $n$ even. In contrast, when $n$ is odd, $S(1+\sigma)_n\amalg S(1+\sigma)_n$ is not framed cobordant to a single copy of $S(1+\sigma)$. Thus, not every element of $\omega_\sigma^{C_2}$ is represented by a connected manifold. This difference between equivariant and non-equivariant framed cobordism is precisely why the equivariant Hopf map is of infinite order while its non-equivariant counterpart is of finite order.

%\printindex
\bibliographystyle{amsalpha}
\bibliography{streamlined}

\end{document}